\documentclass{amsart}
\usepackage{cite}
\usepackage{amssymb}
\usepackage{color} 
\usepackage{mathtools}
\definecolor{mcgreen}{RGB}{0,128,0} 
\definecolor{mcpurple}{RGB}{70,0,200} 
\definecolor{mcblue}{RGB}{0,0,150} 
\definecolor{mcgreen}{RGB}{0,128,0}
\definecolor{mcred}{RGB}{200,25,100}

\definecolor{mcbluegrey}{RGB}{225,230,240}
\definecolor{mcgreengrey}{RGB}{230,240,230}
\definecolor{mcmagenta}{RGB}{138,43,226}

\usepackage{mathrsfs}
\usepackage{amsmath}
\usepackage{thmtools, thm-restate}
\usepackage{verbatim}
\usepackage{hyperref} 
    \usepackage{cleveref}
\usepackage{memhfixc} 
\hypersetup{ 
    colorlinks=true,
    linkcolor=blue,    
    citecolor=blue,
    urlcolor=blue,
}
\usepackage{faktor}
\usepackage{float}
\usepackage[section]{placeins} %for tables to be together

\usepackage{tikz}
\usetikzlibrary{matrix}
\usetikzlibrary{calc}
\usepgflibrary{shapes}
% Style for commutative diagrams
\tikzset{diagram/.style={matrix of math nodes, inner sep=0pt, row
    sep=#1, column sep=2.5em, text height=1.5ex, text depth=.25ex,
    nodes={inner sep=1ex}}}
\tikzset{diagram/.default=2.5em}
\usetikzlibrary{arrows.meta}
\newcommand\diagram{\path node[diagram]}

%THEOREM STYLES:

\usepackage{amsthm}
\usepackage{thmtools, thm-restate}
\newtheorem{thm}{Theorem}[section]

\newtheorem{lem}[thm]{Lemma}

\newtheorem{defn}[thm]{Definition}

\newtheorem{rmk}[thm]{Remark}
\newtheorem{conj}[thm]{Conjecture}

\newenvironment{customthm}[1]
  {\innercustomthm}
  {\endinnercustomthm}

\newenvironment{customcon}[1]
  {\innercustomcon}
  {\endinnercustomcon}

\newcommand{\ZZ}{\mathbb{Z}} 
\newcommand{\QQ}{\mathbb{Q}}

\newcommand{\PP}{\mathscr{P}}

\newcommand{\FF}{\mathbb{F}}
\newcommand{\OK}{\mathcal{O}_K}

\newcommand{\pp}{\mathfrak{p}} 
 
\newcommand{\bb}{\mathfrak{b}}

\newcommand{\aaa}{\mathfrak{a}} 
\newcommand{\mm}{\mathfrak{m}}

\newcommand{\ism}{\cong}

\newcommand{\inv}{^{-1}}

\newcommand{\defeq }{\vcentcolon=}
\newcommand{\Stab}{\operatorname{Stab}}

\newcommand{\spin}{\operatorname{spin}}
\newcommand{\Norm}{\operatorname{Norm}}
\newcommand{\ord}{\operatorname{ord}}
\newcommand{\Gal}{\operatorname{Gal}}

\newcommand{\Art}{\operatorname{Art}}

\newcommand{\preArt}[2]{\mathcal{A}_{#1}^{#2}}
\newcommand{\nRCF}[1]{\operatorname{nR}^{#1}}
\newcommand{\nRCG}[1]{{\operatorname{n\mathcal{C}l} }^{#1}}  %$\nRCG{\mm_0}$
\newcommand{\totpos}[1]{{#1}\succ 0}
%COLORS

%%%%%%%%%%% SWITCH %%%%%%%%%%%%%%%%%%%%%%%%%
%%  uncomment next line to PRINT COMMENTS
%\newcommand{\switch}[1]{{\color{red}{#1}}}
%%  OR uncomment next line instead to NOT PRINT comments
\newcommand{\switch}[1]{}
%%%%%%%%%%% SWITCH %%%%%%%%%%%%%%%%%%%%%%%%%

\pagenumbering{roman}

 %%%%%%%%%%%%%%%%%%%%%%%%%%%%%%%%%%%%%%%%%%%%%%%%%%%%%%%%

\title {Erratum to: On the Asymptotics of a Prime Spin Relation}
\author {Christine McMeekin}
\date{September 11, 2020}

\begin{document}
\maketitle
\begin{center}
      \textit{Original version included. See page \pageref{sec:original}}\\
\end{center}
\vspace{0.9cm}

In the case when the rational primes $p$ are assumed to split completely in $K/\QQ$, there is no change. We make corrections to the case when $p$ is inert. This does not significantly change the strength of the results because the inert case is uninteresting since $\spin(\pp,\sigma)=0$ for all $\sigma\in\Gal(K/\QQ)$ when $\pp$ is a prime of $K$ inert in $K/\QQ$. %The interesting case remains valid.

There are three variations of ``the spin relation" in question when $p$ is inert in $K/\QQ$ and so we consider the following three variations of the set $B$ from Definition \ref{defn:RB}. For $i=1,2,3$, the set $B_i$ below is a set of rational primes satisfying a spin relation. %Definition \ref{defn:RB} defines $B$ as $B_2$ below. 
Throughout the paper, these sets are used interchangeably as $B$, and while the restriction of these sets to primes that split completely coincide, they differ in the inert case.
\begin{align*}
    B_1 \defeq \{& p\in\PP_\QQ^{2\ell}: \spin(\pp,\sigma)\spin(\pp,\sigma\inv)=1 \text{ for all nontrivial }\sigma\in\Gal(K/\QQ)\\
    &\text{where }\pp\text{ is a prime of }K\text{ above }p\}, \\
    B_2 \defeq \{&p\in\PP_\QQ^{2\ell}: (\alpha,\alpha^\sigma)_2=1 \text{ for all nontrivial }\sigma\in\Gal(K/\QQ)\text{ where }\alpha\text{ is a totally}\\
    &\text{positive generator of }\pp^h\text{ and }\pp\text{ is a prime of }K\text{ above }p\}, \quad \text{and} \\
    B_3 \defeq \{&p\in\PP_\QQ^{2\ell}:\spin(\pp,\sigma)=\spin(\pp,\sigma\inv)\text{ for all nontrivial }\sigma\in\Gal(K/\QQ)\\
    &\text{where }\pp\text{ is a prime of }K\text{ above }p\}. 
\end{align*}

%We state an extension of Theorem $1.2$ clarifying the spin relations in the inert case. 
We now state an extension and clarification of Theorem \ref{thm:littledensity:ext}.

\begin{customthm}{1.2*}\label{thm:littledensity:extCORRECTED}
Let $K:=K(n,\ell)$. 
The density of rational primes $p$ that satisfy the spin relations given by $p\in B_1,B_2,B_3$ are as follows.
\begin{align*}
    d(B_1)=&\frac{m_Kn+1}{n2^n} \\
    d(B_2)=&\frac{ 2^{n-1}(n-1)+ m_Kn+1}{n2^n} \\
    d(B_3)=&\frac{2^n(n-1)+m_Kn+1}{n2^n}
\end{align*}
\end{customthm}

Restricted to primes that split completely, $B_1\cap S=B_2\cap S=B_3\cap S$. However in the inert case,  $\spin(\pp,\sigma)=0$ for any $\sigma\in\Gal(K/\QQ)$ where $\pp$ is a prime of $K$. Therefore $B_1\cap I= \emptyset$ and $B_3\cap I=I$, whereas $d(B_2\cap I|I)=1/2$ by Lemma \ref{equidistribution:ext}, which remains valid for $B=B_2$. In equation $(5)$ of the proof of Theorem \ref{thm:littledensity:ext}, replacing $d(B\cap I|I)$ with $d(B_1\cap I|I)=0$, $d(B_2\cap I|I)=1/2$, or $d(B_3\cap I|I)=1$ yields the analogous statements given above.

We now state a modification of Conjecture \ref{conj:constantspindensity}

\begin{customcon}{1.1*}\label{conj:constantspindensityCORRECTED}
Fix $K:=K(n,\ell)$. Let $F$ denote the set of rational primes $p$ such that $\spin(\pp,\sigma)=1$ for all non-trivial $\sigma\in\Gal(K/\QQ)$ where $\pp$ is any prime of $K$ above $p$. Then
\[
d(F):=\frac{ m_Kn +1 }{n\left(\sqrt{2}\right)^{3n-1}} \quad \text{and} \quad d(F|S):= \frac{ m_Kn + 1 }{\left(\sqrt{2}\right)^{3n-1}}.
\]
%Restricting to rational primes that split completely in $K/\QQ$,
%\[
%d(F|S):= \frac{ m_Kn + 1 }{\left(\sqrt{2}\right)^{3n-1}}.
%\]
\end{customcon}

Nothing has changed in the restriction to primes that split completely. In the general case, one can consider $d(F)=d(F|B_1)d(B_1|S)d(S)$. The reason $B_1$ is used is that the corresponding densities $d(F'|B_i')$ in $K$ for $i=1,2,3$ are more naturally accessible, and $d(F|B_1)=d(F'|B_1')$ since $B_1\subseteq S$. 

%It is natural to conjecture that for any $i=1,2,3$, $d(F'|B_i')=(1/2)^{(n-1)/2}$. 
To justify the conjecture that $d(F'|B_i')=(1/2)^{(n-1)/2}$ for any $i=1,2,3$,
we first fix a generator $\tau$ of $\Gal(K/\QQ)$. Let $H\defeq\{\sigma=\tau^i:1\leq i\leq(n-1)/2\}$. Then $H$ is a maximal subset of $\Gal(K/\QQ)$ such that $\sigma\inv \notin H$ whenever $\sigma\in H$. Let $F_H'\defeq\{\pp\in\PP_K:\spin(\pp,\sigma)=1\text{ for all }\sigma\in H\}$. It does not make sense to consider the rational analogue of $F_H$ because the condition defining $H$ is not a statement about all elements of the Galois group, so choosing a different prime of $K$ above a fixed rational prime may yield different answers as to whether the prime belongs in $F_H$. Noting that $F'=F_H'\cap B_i'$ for any $i=1,2,3$ and that $B_i'$ is defined by a congruence condition, it is natural to conjecture based on Theorems 1.1 and 1.2 in \cite{FIMR} that $d(F'|B_i')=(1/2)^{(n-1)/2}$ provided $\spin(\pp,\sigma_1)$ and $\spin(\pp,\sigma_2)$ are independent for distinct $\sigma_1,\sigma_2\in H$.

We now state an extension and clarification of Theorem \ref{thm:Dstarbound2}.

\begin{customthm}{1.3*}\label{thm:Dstarbound2CORRECTED}
Let $K:=K(n,\ell)$. Then
\begin{align*}
    0<\frac{1}{n2^n}\leq &d(B_1)\leq\frac{1}{2n} \\
    0<\frac{2^{n-1}(n-1)+1}{n2^n}\leq &d(B_2) \leq\frac{1}{2} \\
    0<\frac{2^n(n-1)+1}{n2^n}\leq &d(B_3)\leq \frac{2n-1}{2n}
\end{align*}
\end{customthm}

For each $i=1,2,3$, $d(B_i)=d(B_i\cap I|I)d(I)+d(B_i\cap S|S)d(S)$. By Theorem \ref{thm:Dstarbound}, $d(B_i\cap S|S)\leq 1/2$ for each $i=1,2,3$. Since $d(B_1\cap I|I)=0$, $d(B_2\cap I|I)=1/2$, and $d(B_3\cap I|I)=1$, the upper bounds follow. The lower bounds follow trivially from Theorem \ref{thm:littledensity:ext}.

In addition, we make the following remarks.
\begin{enumerate}
    \item From Definition \ref{defn:starp}, $\star(p)=1$ for a rational prime $p$ if and only if $p\in B_2$, and $\star(\pp)=1$ for $\pp$ a prime of $K$ if and only if $\pp\in B_2'$. All of the proofs use the spin relation defined by $B_2$.
    \item The lines following Theorem \ref{StarMG} and Definition \ref{defn:RB} claim that $B_2=B_3$, which is only true when restricted to $S$.
    \item Lemma \ref{spindep} only holds if $\pp$ and $\pp^\sigma$ are coprime, affecting only the inert case.
    \item In the proofs of Lemma \ref{equidistribution} and Lemma \ref{equidistribution:ext}, in the application of Chebotarev's theorem, the extension $L/K$ is not cyclic as stated. Nevertheless it is abelian and Chebotarev's theorem yields the stated result.
    \item The proof of Lemma \ref{homMK:surj} is made more rigorous in \cite[4.6]{DRPjoint}.\\
\end{enumerate}

\textit{
We now include the original version for reference.
}

\newpage

\pagenumbering{arabic}

\section*{Original Version: On the Asymptotics of a Prime Spin Relation}\label{sec:original}

%\title {On the Asymptotics of a Prime Spin Relation}
%\author {Christine McMeekin}
%\date{\today}

%\begin{document}

%\maketitle

\begin{abstract}
For certain cyclic totally real number fields, we give formulas for the density of primes that satisfy a given spin relation. 
\end{abstract}

\tableofcontents

\section{Introduction}\label{sec:Introduction}

Let $K=K(n,\ell)$ denote a totally real number field that is cyclic over $\QQ$ with odd prime degree $n$ such that the class number of $K$ is odd, 2 is inert, and every totally positive unit is a square. Let $\ell$ denote the conductor of $K$ and let $\OK$ denote the ring of integers of $K$.
%We consider cyclic totally real number fields $K:=K(n,\ell)$ defined as follows. Given an odd prime $n$ and a positive integer $\ell\equiv 1 \bmod n$, we define $K(n,\ell)$ to be the subfield of the $\ell^{th}$ cyclotomic field that has degree $n$ over $\QQ$. Furthermore, unless $n$ and $\ell$ are specified, we impose the restrictions that for $K:=K(n,\ell)$,
%\begin{enumerate}
%    \item the class number of $K$ is odd,
%    \item every totally positive unit is a square,
%    \item 2 is inert in $K/\QQ$.
%\end{enumerate}

Let $\sigma \in \Gal(K/\QQ)$, $\sigma\neq 1$. Given an odd principal ideal $\aaa$, following \cite{FIMR}, we define the \textit{spin} of $\aaa$ (with respect to $\sigma$) to be
\[
\text{spin}(\aaa,\sigma) = \left(  \frac{\alpha}{ \aaa^\sigma}   \right)
\]
where $\aaa=(\alpha)$, $\alpha$ is totally positive, and $\left( \frac{\cdot}{\cdot} \right)$ denotes the quadratic residue symbol in $K$. 

%%%%%%%%%%%%%%%%%%%%%%%%%%%%%%%%%%%%%%%%%

The main results of this paper give a formula for the density of rational primes that exhibit the spin relation
\[
\spin(\pp,\sigma) = \spin(\pp,\sigma\inv) \quad \text{for all } \sigma\neq 1 \in \Gal(K/\QQ)
\]
where $\pp$ is a prime of $K$ above $p$. The formula is given in terms of  $n=[K:\QQ]$ and $m_K$, a computable and bounded invariant of the number field $K$.  Define 
\[
\mathbf{M}_4:= (\OK/4\OK)^\times / \left( (\OK/4\OK)^\times \right)^2
\]
where $\OK$ denotes the ring of integers of $K$.
We define \textit{the Starlight invariant} of the number field $K$ (denoted $m_K$) to be the number of non-trivial $\Gal(K/\QQ)$-orbits of $\mathbf{M}_4$ with representative $\alpha\in\OK$ such that the Hilbert symbol $(\alpha,\alpha^\sigma)_2=1$ for all non-trivial $\sigma\in \Gal(K/\QQ)$.

The main results of this paper are motivated by the following conjecture, which gives a computable and bounded formula for the density of rational primes with constant spin equal to $1$.
%Theorem \ref{thm:littledensity:ext} gives the ``unexpected" part of the formula given in Conjecture \ref{conj:constantspindensity}, which gives a formula for the density of rational primes with constant spin equal to $1$.

\begin{conj}\label{conj:constantspindensity} \footnote{See Conjecture \ref{conj:constantspindensityCORRECTED} for correction.}
Fix $K:=K(n,\ell)$. The density of rational primes $p$ such that $\spin(\pp,\sigma)=1$ for all non-trivial $\sigma\in\Gal(K/\QQ)$ is given by
\[
C_K:=\frac{ 2^{n-1}(n-1) + m_Kn +1 }{\left(\sqrt{2}\right)^{3n-1}n}.
%\frac{ 2^{n-1}+ (m_Kn+1)(n-1)}{2^{\left(\frac{3n-1}{2}\right)}n}.
\]
Restricting to rational primes that split completely in $K/\QQ$, the corresponding conditional density is given by
\[
C_{K,S}:= \frac{ m_Kn + 1 }{\left(\sqrt{2}\right)^{3n-1}}.
\]
\end{conj}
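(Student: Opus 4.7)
The plan is to translate the condition $\spin(\pp,\sigma)=1$ for all non-trivial $\sigma$ into a splitting condition in an auxiliary Galois extension $L/\QQ$, and then extract the density via Chebotarev. Since the class number of $K$ is odd, every odd prime $\pp$ of $K$ is principal; and since totally positive units of $\OK$ are squares, we may write $\pp=(\alpha)$ with $\alpha$ totally positive and well-defined modulo squares. The inertness of $2$ makes $\OK/2\OK$ a field, and a first lemma should establish that the class $[\alpha]\in\mathbf{M}_4$ is therefore an invariant of $\pp$ that controls all $n-1$ spins.

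The second step is to apply quadratic reciprocity in $K$ to rewrite $\spin(\pp,\sigma)=(\alpha/\pp^\sigma)$ as $(\alpha^\sigma/\pp)$ multiplied by a product of Hilbert symbols $(\alpha,\alpha^\sigma)_v$ over $v\mid 2\infty$. Under the standing hypotheses the archimedean symbols vanish by total positivity, and the dyadic symbols depend only on $[\alpha]\in\mathbf{M}_4$. This is precisely the point of the Starlight invariant: the $m_K$ orbits it counts are exactly those on which every reciprocity defect disappears, so that the $n-1$ spin conditions reduce cleanly to genuinely independent Kummer-type conditions on $\pp$.

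Next, Artin reciprocity for the ray class field of $K$ with conductor $(4)\cdot\infty$ encodes $[\alpha]\in\mathbf{M}_4$ in the Frobenius of $\pp$. The residual conditions $(\alpha^\sigma/\pp)=1$ then become splitting conditions for $\pp$ in the quadratic extensions $K(\sqrt{\gamma})$ as $\gamma$ ranges over representatives of the Starlight orbits and their Galois conjugates. Assembling the ray class field and these quadratic extensions into a single field $L/\QQ$ — which is Galois over $\QQ$ because the whole construction is $\Gal(K/\QQ)$-stable — and applying Chebotarev restricted to completely split rational primes yields $C_{K,S}$. The numerator $m_Kn+1$ arises from the trivial orbit $\{[1]\}$ together with the $m_K$ non-trivial Starlight orbits, each of size $n$ because $n$ is prime and the orbits are non-fixed. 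Inert primes must be treated separately, since $\pp^\sigma=\pp$ collapses the spin to the single symbol $(\alpha/\pp)$ in the residue field $\OK/\pp\cong\FF_{2^n}$; a direct local analysis at the inert prime above $2$ accounts for the extra $2^{n-1}(n-1)$ term in $C_K$.

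The hard part will be the simultaneous Kummer/reciprocity bookkeeping required to build $L$ and count the surviving Frobenius classes correctly. One must check that no parasitic quadratic dependencies among the various $\sqrt{\gamma^\sigma}$ collapse the degree of $L$, that the Galois group of $L/\QQ$ decomposes over $\mathbf{M}_4$-orbits in the expected way, and that the local-at-$2$ calculation genuinely reproduces the numerator $m_Kn+1$ rather than some other constant. This last step is the technical heart of the argument, and it is where all four standing hypotheses on $K$ — odd prime degree, odd class number, $2$ inert, and totally positive units being squares — enter simultaneously.
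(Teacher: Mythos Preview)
There is a genuine gap, and it is structural rather than technical. The statement you are trying to prove is a \emph{conjecture}, and the paper does not prove it; it only gives a heuristic. The obstruction is exactly the step where you write ``the residual conditions $(\alpha^\sigma/\pp)=1$ then become splitting conditions for $\pp$ in the quadratic extensions $K(\sqrt{\gamma})$.'' This fails because $\alpha$ is not a fixed element of $K$: it is a totally positive generator of $\pp$ itself, so $\alpha^\sigma$ moves with $\pp$. Knowing the class of $\alpha$ in $\mathbf{M}_4$ pins down $\alpha$ modulo $4\OK$ and modulo squares, but the symbol $(\alpha^\sigma/\pp)$ depends on $\alpha^\sigma$ modulo the \emph{odd} prime $\pp$, which is invisible to the ray class field of conductor $(4)\cdot\infty$. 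There is no fixed $\gamma\in K^\times$ with $(\alpha^\sigma/\pp)=(\gamma/\pp)$ for all $\pp$ in a given $\mathbf{M}_4$-class, so no field $L/\QQ$ of the kind you describe can be built. The spin is a genuinely self-referential quadratic residue---the generator of $\pp$ tested modulo a conjugate of $\pp$---and this is precisely why the equidistribution of a single spin in \cite{FIMR} requires sieve methods and bilinear sum estimates rather than Chebotarev.

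What the paper actually does is split the problem into a Chebotarev part and a non-Chebotarev part. The product $\spin(\pp,\sigma)\spin(\pp,\sigma^{-1})$ \emph{is} a Hilbert symbol at~$2$ depending only on $[\alpha]\in\mathbf{M}_4$ (Lemma~\ref{spindep}), so the spin relation $\spin(\pp,\sigma)=\spin(\pp,\sigma^{-1})$ for all $\sigma$ is a Chebotarev condition, and its density $D_K$ is computed unconditionally in Theorem~\ref{thm:littledensity:ext}. The conjecture then posits $C_K=D_K\cdot(1/2)^{(n-1)/2}$: the extra factor comes from assuming that, conditional on the spin relation, the $\frac{n-1}{2}$ remaining independent sign choices $\spin(\pp,\sigma)$ (one per pair $\{\sigma,\sigma^{-1}\}$) are each $+1$ with probability $1/2$ and are mutually independent. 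The results of \cite{FIMR} handle one spin at a time; the joint independence of $\spin(\pp,\sigma)$ and $\spin(\pp,\tau)$ for $\sigma\notin\{\tau,\tau^{-1}\}$ is the open input, and your proposed Kummer-extension argument does not supply it.
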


%The density of rational primes with constant spin equal to $1$ is not expected to be a Cebotarev class. However the ``unexpected" part of the formula comes from a necessary condition which is a Cebotarev condition. In this paper, we explore this necessary condition.

The reasoning behind Conjecture \ref{conj:constantspindensity} is as follows. We break up the density $C_K$ into a product of two densities as though one might break up a probability into a product of conditional probabilities. Then $C_K$ is the product of two densities; the first is $D_K$, the density of rational primes satisfying the  spin relation given in Theorem \ref{thm:littledensity:ext}, and the second is the conditional density of rational primes $p$ with $\spin(\pp,\sigma)=1$ for all non-trivial $\sigma\in\Gal(K/\QQ)$, assuming that $p$ satisfies the spin relation. 
Conjecture \ref{conj:constantspindensity} then asserts that
\[
C_K = D_K \left( \frac{1}{2} \right)^{\left( \frac{n-1}{2} \right)}.
\]
Note that the condition that $p$ satisfies the spin relation is a Cebotarev condition so by Theorems 1.1 and 1.2 in \cite{FIMR}, if $\spin(\pp,\sigma)$ and $\spin(\pp,\tau)$ are independent for $\sigma,\tau\in\Gal(K/\QQ)$ with $\sigma\neq \tau,\tau\inv$, then we arrive at Conjecture \ref{conj:constantspindensity}.

A corollary of Conjecture \ref{conj:constantspindensity} is a family of number fields $\{F_K(p)\}_p$ depending on $p$ such that $p$ is always ramified in $F_K(p)/\QQ$ and the density of rational primes that split as completely as possible in $F_K(p)/\QQ$ (given the ramification) is 
\[
\frac{C_{K,S}}{n}.
\]
%This family of number fields is defined explicitly in \cite{DRP} via a class field theoretic construction.

\begin{restatable}{thm}{littledensityext}
\label{thm:littledensity:ext}
\footnote{See Theorem \ref{thm:littledensity:extCORRECTED} for correction.}
Let $K:=K(n,\ell)$. 
The density of rational primes $p$ that satisfy the spin relation
\[
\spin(\pp,\sigma) = \spin(\pp,\sigma\inv) \quad \text{for all } \sigma\neq 1 \in \Gal(K/\QQ)
\]
where $\pp$ is a prime of $K$ above $p$ is given by
\[
D_K=\frac{ 2^{n-1}(n-1)+ m_Kn+1}{2^nn}.
\] 
%\[
%d(B |\PP_\QQ^{2\ell}) =  \frac{ 2^{n-1}+ (m_Kn+1)(n-1)}{2^nn}.
%\]
\end{restatable}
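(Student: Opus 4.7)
The plan is to split the density by the behavior of $p$ in $K$---only ``split completely'' and ``inert'' occur with positive density, as $n$ is prime and ramification contributes zero---and, in each case, to recognize the spin relation as a Hilbert-symbol condition at the unique prime of $K$ above $2$ that can be counted by Chebotarev.

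First I would reformulate the spin via reciprocity. For any odd prime $\pp$ of $K$, let $\alpha$ be a totally positive generator of $\pp^h$, where $h$ is the (odd) class number; the hypothesis that totally positive units are squares makes $\alpha$ well defined modulo squares in $\OK$. Since $h$ is odd and spin values are $\pm 1$, raising to the $h$-th power gives $\spin(\pp,\sigma) = \left(\frac{\alpha}{\pp^\sigma}\right) = \left(\frac{\alpha}{\alpha^\sigma}\right)$, and $K$-reciprocity---combined with the facts that $K$ is totally real and $\alpha$ is totally positive, so all archimedean Hilbert symbols are trivial---yields
\[
\spin(\pp,\sigma)\,\spin(\pp,\sigma^{-1}) \;=\; (\alpha,\alpha^\sigma)_2,
\]
with $(\cdot,\cdot)_2$ the Hilbert symbol at the unique prime of $K$ above $2$. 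The spin relation is therefore equivalent to $(\alpha,\alpha^\sigma)_2=1$ for every nontrivial $\sigma$.

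Next I would show that this Hilbert condition descends to the class of $\alpha$ in $\mathbf{M}_4$, so that by definition the favorable classes form the trivial orbit together with the $m_K$ nontrivial orbits counted by the Starlight invariant. For the Chebotarev step I would work in the compositum $L := K\bigl(\sqrt{\alpha} : [\alpha] \in \mathbf{M}_4\bigr)$: Kummer theory gives $\Gal(L/K) \cong \mathbf{M}_4$ of order $2^n$, so $\Gal(L/\QQ) \cong \mathbf{M}_4 \rtimes \Gal(K/\QQ)$ has order $n\cdot 2^n$. For split $p$, the $n$ primes above $p$ map to a single $\Gal(K/\QQ)$-orbit $O \subseteq \mathbf{M}_4$, and Chebotarev assigns density $|O|/(n\cdot 2^n)$ to each such orbit; summing over the trivial orbit (size $1$) and the $m_K$ favorable nontrivial orbits (each of size $n$, because the only fixed nontrivial class $\{-1\}$ fails the Hilbert condition: $(-1,-1)_2 = -1$ for $n$ odd) gives a split contribution $(1 + m_K n)/(n\cdot 2^n)$. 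For inert $p$ with $\pp = p\OK$, one may take $\alpha = p$; the Hilbert condition collapses to $(-1,p)_2 = 1$, i.e.\ $p \equiv 1 \pmod{4}$. Since $K/\QQ$ is unramified at $2$ and $\ell$ is coprime to $4$, the residue of $p$ mod $4$ is independent of its Frobenius in $K/\QQ$, so exactly half of the inert primes satisfy the condition, contributing $(n-1)/(2n) = 2^{n-1}(n-1)/(n\cdot 2^n)$. Adding the two pieces produces the stated $D_K$.

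The main obstacle is the descent step: verifying that $(\alpha,\alpha^\sigma)_2$ really is well defined on $\mathbf{M}_4$ (rather than only on a finer quotient of $K^\times_{(2)}$), and that the $m_K$ orbits in the definition of the Starlight invariant line up precisely with the split-prime Chebotarev count in $L$. Here the hypotheses on $K = K(n,\ell)$---$2$ inert, all totally positive units squares, class number odd---do the work; once the descent is in place, the Chebotarev density calculation for split primes and the short independent argument for the inert case are routine bookkeeping.
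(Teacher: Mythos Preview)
Your overall architecture matches the paper's: reduce the spin relation to the Hilbert-symbol condition $(\alpha,\alpha^\sigma)_2=1$ at the unique prime above $2$ (this is Lemma~\ref{spindep}), separate rational primes into the split and inert pools, and combine a Chebotarev count on the split side with a direct $\tfrac{1}{2}$ on the inert side. Your inert argument is actually cleaner than the paper's: with $\alpha=p^{h}\equiv p$ modulo squares the condition collapses to $(p,p)_2=(-1,p)_2$, which for $K_{(2)}/\QQ_2$ unramified of odd degree equals the $\QQ_2$-symbol and hence reads $p\equiv 1\pmod 4$; independence from the splitting type in $K$ follows from $\gcd(4,\ell)=1$. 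The paper instead routes the inert case through the $\Gal(K/\QQ)$-invariants of $\mathbf{M}_4$ and Lemma~\ref{pm1}.

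The genuine gap is in the split case, in your choice of $L$. Writing $L=K(\sqrt{\alpha}:[\alpha]\in\mathbf{M}_4)$ and invoking Kummer theory does not produce the extension you need. First, $\mathbf{M}_4$ is a quotient of $(\OK/4\OK)^\times$, not a subgroup of $K^\times/(K^\times)^2$, so $L$ depends on the chosen lifts and there is no reason their images in $K^\times/(K^\times)^2$ span a group of order $2^n$. Second, and more importantly, even granting $[L:K]=2^n$, the Frobenius of $\pp$ in a Kummer extension $K(\sqrt{\beta_1},\dots,\sqrt{\beta_n})$ records the residue symbols $\bigl(\tfrac{\beta_i}{\pp}\bigr)$, whereas what you must equidistribute is $\mathbf{r}_0(\pp)$, the class in $\mathbf{M}_4$ of a totally positive generator of $\pp^{h}$. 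These are different data, and you supply no identification between them. The paper handles this on the class-field side: it shows (Lemma~\ref{homMK:surj}) that $\pp\mapsto\mathbf{r}_0(\pp)$ induces a surjection $\varphi:\nRCG{4}_K\twoheadrightarrow\mathbf{M}_4$, takes $L$ to be the subfield of the narrow ray class field $\nRCF{4}_K$ fixed by $\Art(\ker\varphi)$, and then $\Frob_{\pp}\in\Gal(L/K)$ is identified with $\mathbf{r}_0(\pp)\in\mathbf{M}_4$ by construction, so Chebotarev gives the $1/2^n$ equidistribution (Lemma~\ref{equidistribution}). Once you replace your Kummer $L$ by this ray-class $L$, your orbit bookkeeping goes through and matches Theorem~\ref{thm:littledensity}. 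The descent of $(\alpha,\alpha^\sigma)_2$ to $\mathbf{M}_4$ that you flag as the main obstacle is real and is Theorem~\ref{StarMG}, but it is a separate issue and does not repair the problem with $L$.
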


\begin{restatable}{thm}{Dstarboundtwo}
\label{thm:Dstarbound2}
% D_K=d(B|\PP_\QQ^{2\ell})
\footnote{See Theorem \ref{thm:Dstarbound2CORRECTED} for correction.}
Let $K:=K(n,\ell)$. Then
\[
0<\frac{ 2^{n-1}(n-1)+1}{2^nn}\leq  D_K \leq \frac{1}{2}.
\]
\end{restatable}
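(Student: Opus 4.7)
The plan is to pass through Theorem \ref{thm:littledensity:ext}, which expresses $D_K$ as a strictly increasing affine function of $m_K$; the stated bounds on $D_K$ are thus equivalent to the two-sided bound $0 \leq m_K \leq (2^{n-1}-1)/n$. The lower bound $m_K \geq 0$ is immediate because $m_K$ is by construction a count of orbits, and positivity of the resulting lower bound on $D_K$ follows since its numerator is a sum of positive integers.

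The bulk of the work goes into the upper bound. I would first identify $\mathbf{M}_4$ as a module over $G := \Gal(K/\QQ)$. Because $2$ is inert in $K$, the Teichm\"uller decomposition gives $(\OK/4\OK)^\times = \mu_{2^n-1} \times (1 + 2\OK/4\OK)$, and since $(1+2a)^2 \equiv 1 \pmod 4$ while squaring is a bijection on the odd-order group $\mu_{2^n-1}$, the subgroup of squares is exactly $\mu_{2^n-1}$. Hence $\mathbf{M}_4 \cong 1+2\OK/4\OK \cong \FF_{2^n}$ as $G$-modules, with $G$ acting as $\Gal(\FF_{2^n}/\FF_2)$. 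Under this identification $[-1]$ corresponds to $1 \in \FF_{2^n}$, so the involution $[\alpha] \mapsto [-\alpha]$ becomes $x \mapsto x+1$, and the non-trivial orbits consist of the singleton $\{[-1]\}$ together with $(2^n-2)/n$ orbits of size $n$.

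I would then dispose of the singleton and pair the rest. The completion of $K$ at the prime above $2$ is unramified of odd degree over $\QQ_2$, so $(-1,-1)_2 = (-1,-1)_{\QQ_2}^n = -1$ and $\{[-1]\}$ fails the Hilbert symbol condition. For the size-$n$ orbits, $x \mapsto x+1$ commutes with Frobenius and descends to an involution on the orbit set. I would check it is fixed-point-free by noting that a fixed orbit would force $x^{2^i}+x = 1$ for some $1 \leq i \leq n-1$, but the Artin--Schreier map $y \mapsto y^{2^i}+y$ has image $\ker \operatorname{Tr}_{\FF_{2^n}/\FF_2}$ (since $\gcd(i,n) = 1$ is automatic with $n$ prime), and $\operatorname{Tr}_{\FF_{2^n}/\FF_2}(1) = n \equiv 1 \pmod 2$ rules out $1$ being in the image. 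This partitions the size-$n$ orbits into $(2^{n-1}-1)/n$ pairs.

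The final ingredient is the bilinearity identity $(-\alpha,(-\alpha)^\sigma)_2 = -(\alpha,\alpha^\sigma)_2$ for $\sigma \neq 1$, obtained by expanding the symbol and using $(-1,-1)_2 = -1$ together with symmetry and Galois-invariance of the Hilbert symbol to kill the cross terms $(-1,\alpha^\sigma)_2 (\alpha,-1)_2$. Hence if some lift $\alpha$ witnesses that $[\alpha]$ is counted, then $-\alpha$ fails. The main subtlety is that $m_K$'s definition allows any representative in $\OK$, so I must also rule out the other class in $K_2^\times/(K_2^\times)^2$ lying over $[-\alpha]$. This alternative class is represented by $-5\alpha$, where $5$ represents the non-trivial kernel of the unit-squares map to $\mathbf{M}_4$ (it is not a square in the completion since $\operatorname{Tr}_{\FF_{2^n}/\FF_2}(1) = 1$). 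Since $5 \in \QQ$ is Galois-fixed and $(5,5)_2 = 1$ (because $5$ is a norm from the unramified extension $K_2(\sqrt{5})$), a parallel bilinearity expansion yields $(-5\alpha,(-5\alpha)^\sigma)_2 = -(\alpha,\alpha^\sigma)_2$ as well, closing the loophole. Combining these, $m_K \leq (2^{n-1}-1)/n$, hence $D_K \leq 1/2$.
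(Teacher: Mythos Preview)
Your argument is correct and rests on the same core identity as the paper's, namely $(-\alpha,(-\alpha)^\sigma)_2 = -(\alpha,\alpha^\sigma)_2$, which the paper isolates as Lemma~\ref{halfstar}. The paper's route is shorter in two respects. First, it works at the level of \emph{elements} of $\mathbf{M}_4$ rather than orbits: since $-1$ is not a square modulo $4\OK$, negation is already a fixed-point-free involution on $\mathbf{M}_4$ itself, so $\#\{\alpha\in\mathbf{M}_4:\star(\alpha)=1\}\leq 2^{n-1}$ follows immediately and the orbit bound $m_Kn+1\leq 2^{n-1}$ drops out with no Artin--Schreier step. Second, your ``loophole'' paragraph (checking the $-5\alpha$ lift) is precisely the well-definedness of $\star$ on $\mathbf{M}_4$, already recorded as Theorem~\ref{StarMG}; once that is in hand, a single representative suffices and there is nothing further to close. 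The paper then packages the result as: Theorem~\ref{thm:Dstarbound} gives $\tfrac{1}{2^n}\leq d(R|S)\leq\tfrac12$, and the decomposition $D_K=\tfrac{n-1}{2n}+\tfrac1n\,d(R|S)$ from the proof of Theorem~\ref{thm:littledensity:ext} transfers this directly to the stated bounds on $D_K$. Your orbit-level pairing and the Artin--Schreier trace computation are correct and pleasant, but not needed.
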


Table \ref{tab:expectedmK} gives examples of computed Starlight invariants for cyclic number fields of degree $n$ over $\QQ$ and conductor $\ell$ for the given $n$ and $\ell$ values and it gives the corresponding density of primes satisfying the spin relation.
These values of $m_K$ were computed using magma \cite{magma}; the code can be found in Appendix $B$ of \cite{DRP}.

\begin{table}[H]
    \centering
    \bgroup
\def\arraystretch{1.5}
\setlength\tabcolsep{.4cm}
    \begin{tabular}{ |c || c| c| c| c| c| c| c|}\hline
%$n$ & 3 & 5 & 7 & 9 & 11 & 13 & 15\\  \hline
%$m$ & 1 & 1 & 3 & 3 & 3 &  5  & 17  \\  \hline
%D &$ \frac{1}{4}$& $\frac{3}{64}$& $\frac{11}{512}$&$ \frac{7}{2048}$&$ \frac{17}{32768}$& $\frac{33}{262144}$&$ \frac{1}{16384}$\\ \hline 
$n$ & 3 & 5 & 7  & 11 & 13 & 17 & 19\\  \hline
$\ell$ & 7 &  11 & 43 & 23 & 53 & 103 & 191 \\ \hline
$m_K$ & 1 & 1 &  3 & 3 &  5 & 17 & 27   \\  \hline
%$d(R|S)$&$ \frac{1}{2}$& $\frac{3}{16}$& $\frac{11}{64}$&$ \frac{17}{1024}$& $\frac{33}{4096}$ &$\frac{145}{65536}$ & $\frac{257}{262144}$\\ \hline 
$D_K$ & $\frac{1}{2}$ & $\frac{7}{16}$ & $\frac{29}{64}$& $\frac{467}{1024}$& $\frac{1893}{4096}$& $\frac{30849}{65536}$& $\frac{124187}{262144}$\\ \hline
    \end{tabular}
    \egroup
    \caption{Computed Starlight invariants and densities of the prime spin relation using Theorem \ref{thm:littledensity:ext}.}
    \label{tab:expectedmK}
\end{table}

We remark that we can simplify the restrictions on $K$ in the cubic case. For $n=3$, the assumption that the class number of $K$ is odd is sufficient to imply that every totally positive unit is a square due to results of Armitage and Fr\"ohlich \cite{AF}. 

\begin{thm}\cite{AF}
Let $K$ be a cyclic cubic number field with odd class number. Then every totally positive unit is a square. 
%Assume 2 and 5 are inert in $K/\QQ$ and the conductor
\end{thm}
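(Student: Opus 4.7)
The plan is to reduce the conclusion to the surjectivity of the signature map on units and then to rule out its failure using $\FF_2[G]$-module structure together with the Armitage-Fr\"ohlich signature-rank inequality. Write $U = \OK^\times$ and $G = \Gal(K/\QQ) \cong \ZZ/3\ZZ$. Since $K$ has three real embeddings and no complex embeddings, Dirichlet's theorem gives $U \cong \{\pm 1\} \times \ZZ^2$, so $U/U^2$ is an $\FF_2$-vector space of dimension $3$, matching the dimension of the target $(\FF_2)^3$ of the signature map $\operatorname{sgn}\colon U/U^2 \to (\FF_2)^3$. Every totally positive unit is a square iff $\operatorname{sgn}$ is injective, which by equality of dimensions is the same as surjectivity, so it suffices to prove $\operatorname{sgn}$ surjective.

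Next I would exploit $G$-equivariance. Because $|G| = 3$ is coprime to $2$, $\FF_2[G]$ is semisimple with two irreducibles: the trivial module $\FF_2$ and a two-dimensional simple $V$ (realized as $\FF_4$ with $G$ acting via a primitive cube root of unity). Both $U/U^2$ and $(\FF_2)^3$ decompose as $\FF_2 \oplus V$: in the source the trivial summand is spanned by the class of $-1$, and in the target by $(1,1,1)$. The map $\operatorname{sgn}$ is $G$-equivariant and sends $-1$ to $(1,1,1)$, so it is an isomorphism on the trivial summands. By Schur's lemma the restriction $\operatorname{sgn}|_V\colon V \to V$ is either zero or an isomorphism; in the latter case $\operatorname{sgn}$ is surjective and the theorem follows.

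Assume for contradiction that $\operatorname{sgn}|_V = 0$, so that $\ker \operatorname{sgn} \cong V$ and $\operatorname{coker}\operatorname{sgn} \cong V$. The standard exact sequence
\[
1 \to U^+ \to U \xrightarrow{\operatorname{sgn}} \{\pm 1\}^{3} \to \mathrm{Cl}^+(K) \to \mathrm{Cl}(K) \to 1
\]
identifies $\operatorname{coker}\operatorname{sgn}$ with $\ker\bigl(\mathrm{Cl}^+(K) \twoheadrightarrow \mathrm{Cl}(K)\bigr)$. Since $\mathrm{Cl}(K)$ has odd order by hypothesis, the $2$-part of $\mathrm{Cl}^+(K)$ is exactly this kernel, giving $\mathrm{Cl}^+(K)[2] \cong V$ and in particular $\mathrm{rk}_2 \mathrm{Cl}^+(K) = 2$. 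But the Armitage-Fr\"ohlich inequality, which for a totally real field of degree $n$ bounds $\mathrm{rk}_2 \mathrm{Cl}^+(K) - \mathrm{rk}_2 \mathrm{Cl}(K) \leq \lfloor n/2 \rfloor$, yields $\mathrm{rk}_2 \mathrm{Cl}^+(K) \leq 1$ when $n=3$ and $\mathrm{Cl}(K)$ is odd, contradicting the computed value $2$.

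The main obstacle is the Armitage-Fr\"ohlich inequality itself, which is the substantive external input and is established via $2$-descent on Kummer extensions together with careful control of ramification at infinite places and primes above $2$. It is worth emphasizing why Galois cohomology alone does not suffice: for $K = K(3,\ell)$ with $\ell$ the unique ramified prime, Chevalley's ambiguous class number formula gives $\lvert \mathrm{Cl}^+(K)^G \rvert = 1$, which rules out trivial $G$-summands in $\mathrm{Cl}^+(K)[2]$ but is consistent with $\mathrm{Cl}^+(K)[2] \cong V$; it is precisely the AF bound that excludes this remaining possibility.
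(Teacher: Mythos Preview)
Your argument is correct. Both your proof and the paper's ultimately rest on Armitage--Fr\"ohlich, but they invoke different results from that work and organise the reduction differently. The paper simply quotes Theorem~V of \cite{AF}, which already asserts that every signature is realised by a unit; since $\lvert U/U^2\rvert = 8 = \lvert\{\pm1\}^3\rvert$, surjectivity of the signature map forces injectivity, hence $U_T=U^2$. Your route is more structural: you decompose both $U/U^2$ and $\{\pm1\}^3$ as $\FF_2\oplus V$ over $\FF_2[G]$, observe that $\operatorname{sgn}$ is automatically an isomorphism on the trivial summand, and use Schur to reduce the question to whether $\operatorname{sgn}|_V$ vanishes; you then exclude that case via the Armitage--Fr\"ohlich rank inequality $\operatorname{rk}_2\mathrm{Cl}^+(K)-\operatorname{rk}_2\mathrm{Cl}(K)\le\lfloor r_1/2\rfloor$. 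What the paper's approach buys is brevity: one citation and a cardinality count. What yours buys is insight into \emph{why} the cubic Galois structure makes the problem rigid (the all-or-nothing dichotomy on $V$), and it shows clearly that the genuinely arithmetic input needed is only the coarser rank bound rather than the full signature-realisation statement; your closing remark that Chevalley's ambiguous class number formula alone cannot finish the job is a nice sharpening of this point.
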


\begin{proof}
%Let $h(K)$ denote the class number of $K$. 
Let $U:= \OK^\times$ denote the group of units, $U_T$ the totally positive units, and $U^2$ the square units. 
Observe $U^2\subseteq U_T \subseteq U$. Then we have a surjective homomorphism
\[
\phi: \frac{U}{U^2} \to \frac{U}{U_T}.
\]
If none of the nontrivial class representatives of $U/U^2$ are totally positive then $\phi$ is injective.  By Theorem $V$ in \cite{AF}, all signatures are represented by units. Square units are always totally positive and there are 8 signatures and 8 classes of units mod squares, so each class of $U/U^2$ must have a different signature. Therefore $U_T$$=$$U^2$.
\end{proof}

%\newpage

\section{The Spin of Prime Ideals}\label{sec:spin}

Let $K:=K(n,\ell)$ and let $h(K)$ denote the class number of $K$.

\begin{defn}\cite{FIMR}\label{defn:spin} Let $\sigma\neq 1 \in \Gal(K/\QQ)$. Given an odd principal ideal $\aaa$, we define the \textit{spin} of $\aaa$ (with respect to $\sigma$) to be
\[
\text{spin}(\aaa,\sigma) = \left(  \frac{\alpha}{ \aaa^\sigma}   \right)
\]
where $\aaa^{h(K)}=(\alpha)$, $\alpha$ is totally positive, and $\left( \frac{\cdot}{\cdot} \right)$ denotes the quadratic residue symbol in $K$. 
\end{defn}

Spin is well-defined; since every totally positive unit is a square, the choice of totally positive generator $\alpha$ does not affect the quadratic residue and Lemma \ref{RCFh} asserts the existence of a totally positive generator.

%we will see that the existence of a totally positive generator comes from Lemma \ref{RCFh}. %and Remark \ref{Etotposgen} which are independent of this section.

%To be more explicit,
%\[
%\mu_2(\alpha,\alpha^\sigma)= \prod_{v|2} (\alpha,\alpha^\sigma)_v.
%\]

Lemma 11.1 in \cite{FIMR} states that the product $\text{spin}(\pp,\sigma)\text{spin}(\pp,\sigma\inv)$ is a product of Hilbert symbols at places dividing 2. We restate this more explicitly in Lemma \ref{spindep}.

For a place $v$ of $K$, let $K_{(v)}$ denote the completion of $K$ at $v$. 
For $a,b\in K$ co-prime to $v$, the Hilbert Symbol is defined such that $(a,b)_v:=1$ if the equation $ax^2+by^2=z^2$ has a solution $x,y,z\in K_{(v)}$ where at least one of $x$, $y$, or $z$ is nonzero and $(a,b)_v:=-1$ otherwise.

\begin{lem}\cite{FIMR}\label{spindep}
Let $K:=K(n,\ell)$.
Let $\alpha$ be a totally positive generator of the odd prime ideal $\pp\subseteq\OK$. Then
\[
\text{spin}(\pp,\sigma)\text{spin}(\pp,\sigma\inv) = \prod_{v|2}(\alpha,\alpha^\sigma)_v.
\]
In particular, if $\alpha\equiv 1 \bmod 4$ then $\prod_{v|2}(\alpha,\alpha^\sigma)_v=1$.
Since 2 is inert in $K/\QQ$,
\[
\text{spin}(\pp,\sigma)\text{spin}(\pp,\sigma\inv) = (\alpha,\alpha^\sigma)_2.
\]
\end{lem}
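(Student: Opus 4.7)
The plan is to reduce the claim to the quadratic reciprocity law in $K$ via Galois equivariance of the quadratic residue symbol, then strip off the archimedean and even-place factors. First, since $\pp = (\alpha)$ with $\alpha$ totally positive, we also have $\pp^\sigma = (\alpha^\sigma)$, so Definition \ref{defn:spin} gives
\[
\spin(\pp,\sigma)\spin(\pp,\sigma\inv) = \left( \frac{\alpha}{\alpha^\sigma} \right)\left( \frac{\alpha}{\alpha^{\sigma\inv}} \right).
\]
Applying the Galois automorphism $\sigma$ to the second factor --- which the quadratic residue symbol respects, since its values lie in $\{\pm 1\}$ --- rewrites it as $\left( \frac{\alpha^\sigma}{\alpha} \right)$. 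The product thus becomes the left-hand side of the reciprocity law applied to $\alpha$ and $\alpha^\sigma$.

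Next I would invoke quadratic reciprocity in $K$: for coprime totally positive odd $a, b \in \OK$,
\[
\left( \frac{a}{b} \right)\left( \frac{b}{a} \right) = \prod_{v \mid 2\infty} (a,b)_v.
\]
Total positivity of $\alpha$ (hence of $\alpha^\sigma$) makes every Hilbert symbol at an archimedean place equal to $1$, leaving $\prod_{v \mid 2}(\alpha,\alpha^\sigma)_v$, which is the first identity of the lemma. For the ``in particular'' assertion, I would observe that $\alpha \equiv 1 \bmod 4\OK$ forces $\alpha^\sigma \equiv 1 \bmod 4\OK$, and it is a standard local fact that the Hilbert symbol at any place above $2$ is trivial whenever both arguments lie in $1 + 4\OK$, since such elements are local squares. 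The final assertion is then just the observation that $2$ inert in $K/\QQ$ means there is only one place $v \mid 2$, so the product collapses to the single Hilbert symbol $(\alpha,\alpha^\sigma)_2$.

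The sole substantive ingredient is the reciprocity law in $K$; everything else is routine manipulation of the quadratic residue symbol and of local Hilbert symbols. The main obstacle, if one wishes to be self-contained rather than simply cite Lemma 11.1 of \cite{FIMR}, is verifying that coprime totally positive odd generators feed into the reciprocity formula with the correct sign conventions --- but this reduces to a direct application of the general reciprocity law, with total positivity cleanly killing the archimedean contribution.
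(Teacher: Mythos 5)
Your core argument is correct and follows essentially the same route the paper points to: the paper's one-line proof defers to Lemma~11.1 of \cite{FIMR} ``or use the standard fact of Hilbert symbols that $\prod_v(\alpha,\alpha^\sigma)_v=1$,'' and your appeal to quadratic reciprocity in $K$ is exactly the unpacking of that Hilbert reciprocity product: the odd finite places supported on $\alpha\alpha^\sigma$ contribute the two residue symbols, total positivity kills the archimedean places, and what is left is $\prod_{v\mid 2}(\alpha,\alpha^\sigma)_v$. The Galois-equivariance step rewriting $\bigl(\tfrac{\alpha}{\alpha^{\sigma^{-1}}}\bigr)$ as $\bigl(\tfrac{\alpha^\sigma}{\alpha}\bigr)$ is also fine.

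One justification you give is wrong, though. For the ``in particular'' clause you assert that elements of $1+4\OK$ are local squares at places over $2$. That is false: already in $\QQ_2$ the unit $5\in 1+4\ZZ_2$ is not a square, since $(\ZZ_2^\times)^2=1+8\ZZ_2$; and the same failure persists in the unramified completion $K_{(2)}$. The conclusion you want --- that $(\alpha,\alpha^\sigma)_v=1$ when $\alpha\equiv 1\bmod 4$ --- is nevertheless true, but for a different reason: the tame-at-$2$ Hilbert pairing on units, while nondegenerate on all of $K_{(2)}^\times/(K_{(2)}^\times)^2$, restricts to a degenerate pairing on units, and the class of $1+4\OK_{(2)}$ lies in its radical. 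In $\QQ_2$ this is the explicit formula $(u,w)_2=(-1)^{\frac{u-1}{2}\cdot\frac{w-1}{2}}$ for units $u,w$, which vanishes once $u\equiv 1\bmod 4$; the unramified analogue is what is really being invoked (it is also what the paper's later appeal to Lemma~2.3 of \cite{FIMR} is tracking). So you should replace ``since such elements are local squares'' by an appeal to this degeneracy or to the cited congruence lemma, rather than to squareness.

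Everything else, including collapsing the product to a single symbol $(\alpha,\alpha^\sigma)_2$ using that $2$ is inert, matches the intended argument.
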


\begin{proof}
See Lemma 11.1 in \cite{FIMR} or use the standard fact of Hilbert symbols that $\prod_{v}(\alpha,\alpha^\sigma)_v=1$.
\end{proof}

%\begin{rmk}\label{rmk:2inertHilb}
%Let $K:=K(n,\ell)$. 
%\end{rmk}

\section{Some Class Field Theory}\label{sec:narrowwide}%[RCFh]%[Euu]

We now diverge momentarily from the spin of prime ideals to discuss some class field theory in the case when every totally positive unit is a square. 
 We say a modulus is \textit{narrow} whenever it is divisible by all infinite places. We say a modulus is \textit{wide} whenever it is not divisible by any infinite place. We say a ray class group or ray class field is narrow or wide whenever its defining modulus is narrow or wide respectively.
 Let $U:=\OK^\times$ denote the group of units of $K$, let $U_T$ denote the totally positive units, and let $U^2$ denote the square units.
The following lemma is an exercise in class field theory.
    
    \begin{lem}\label{RCFh} \switch{[RCFh]}
Let $K$ be a totally real number field. The following are equivalent.
\begin{enumerate}
    \item $U_T=U^2$.
    \item The \textit{narrow} and \textit{wide} Hilbert class groups of $K$ coincide.
    \item Every principal ideal of $K$ has a totally positive generator.
\end{enumerate}

\end{lem}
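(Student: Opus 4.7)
The plan is to introduce the signature map $s\colon K^\times \to \{\pm 1\}^n$ (where $n=[K:\QQ]$ also equals the number of real places of $K$) and reduce all three conditions to the single statement $s(U)=\{\pm 1\}^n$, i.e.\ that every signature is realized by a unit.

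First I would handle (2)$\Leftrightarrow$(3). Writing $P$ for the group of principal fractional ideals and $P^+$ for the subgroup of those admitting a totally positive generator, the natural surjection $\mathrm{Cl}^+(K)\twoheadrightarrow \mathrm{Cl}(K)$ has kernel $P/P^+$. So (2) is equivalent to $P=P^+$, which by the definition of $P^+$ is tautologically (3).

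To translate (3) into a sign condition, identify $P\ism K^\times/U$; under this identification $P^+$ corresponds to $s\inv(s(U))/U$, so $P/P^+ \ism s(K^\times)/s(U)$. Weak approximation supplies $s(K^\times)=\{\pm 1\}^n$, and hence (3) is equivalent to $s(U)=\{\pm 1\}^n$.

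For (1), I restrict $s$ to $U$. Since $U^2\subseteq U_T = \ker(s|_U)$, the sign map descends to $U/U^2 \to \{\pm 1\}^n$ with kernel $U_T/U^2$ and image $s(U)$, yielding the short exact sequence
\[
1 \to U_T/U^2 \to U/U^2 \to s(U) \to 1.
\]
Dirichlet's unit theorem gives $U\ism \{\pm 1\}\times \ZZ^{n-1}$ for totally real $K$, hence $|U/U^2|=2^n$ and $|U_T/U^2|=2^n/|s(U)|$. Thus (1) is equivalent to $|s(U)|=2^n$, i.e.\ $s(U)=\{\pm 1\}^n$, closing the loop. The argument is clean class field theory bookkeeping; the only nontrivial input is weak approximation for the surjectivity of the signature map on $K^\times$, which I expect to be the step most worth stating explicitly.
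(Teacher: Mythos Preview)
Your argument is correct. For (2)$\Leftrightarrow$(3) you do exactly what the paper does: read off the kernel of $\mathrm{Cl}^+(K)\twoheadrightarrow\mathrm{Cl}(K)$ from the definitions. For (1)$\Leftrightarrow$(2) the paper simply invokes Milne's Theorem~V.1.7 with the purely infinite modulus, which produces the exact sequence
\[
1 \to U/U_T \to K^\times/K^\times_{+} \to \mathrm{Cl}^+(K) \to \mathrm{Cl}(K) \to 1,
\]
and then the equivalence is immediate once one knows $|K^\times/K^\times_{+}|=2^n=|U/U^2|$. You unpack precisely this content by hand: your signature map $s$ realizes $K^\times/K^\times_{+}\ism\{\pm1\}^n$ (via weak approximation), and your cardinality count $|U/U^2|=2^n$ from Dirichlet's unit theorem is what forces $U_T=U^2\Leftrightarrow s(U)=\{\pm1\}^n$. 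The only organizational difference is that you route both (1) and (3) through the hub condition $s(U)=\{\pm1\}^n$, whereas the paper links (1) and (2) directly by citation; your version is more self-contained, the paper's is terser.
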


\begin{proof}
To show the equivalence of (1) and (2), apply Theorem V.1.7 in \cite{MilneCFT} using the modulus given by the product of all infinite places. Statements (3) and (2) are equivalent by the definitions of narrow and wide class groups.
\end{proof}

%\section{An Important Group}%[defn:MK]%[Mundy]%[Hilb4][lemma:StarMG:1] [lemma:StarMG:2]%[StarMG]

\begin{restatable}{defn}{MKdefn}\label{defn:MK} Let $K:=K(n,\ell)$.
For $q$ a power of 2, we define the group
 \[
\mathbf{M}_q:=  \left(\OK/q\OK\right)^\times/\left(\left(\OK/q\OK\right)^\times\right)^2.
\]

The Galois group $\Gal(K/\QQ)$ acts on $\mathbf{M}_q$ in the natural way.
\end{restatable} 

We will primarily be interested in $\mathbf{M}_4$. %though we will also consider $\mathbf{M}_8$. 
We will see in Lemma \ref{homMK:surj} that $\mathbf{M}_q$ is canonically isomorphic to a quotient of the narrow ray class group over $K$ of conductor $q$ modulo squares.  %We will also see a proof that 
%$
%\mathbf{M}_4 \ism (\ZZ/2)^n.
%$
    
    %13. [Mundy] Lemma: $K$ a cyclic number field of odd prime degree $n$ with 2 inert. Then \[\text{$\mathbf{M}_4$} \ism (\ZZ/2)^n \]and the invariants of the Galois action are $\pm1$.
    
\begin{lem}\cite{Mundy}\label{Mundy}\switch{[Mundy]} Let $K$ be a cyclic number field of odd %prime 
degree $n$ over $\QQ$ such that $2$ is inert in $K$.  Then as vector spaces 
\[
\mathbf{M}_4\ism (\ZZ/2)^n.
\]%and as $\Gal(K/\QQ)$-modules,
%\[
%\mathbf{M}_4\ism \mathbb{F}_2[\Gal(K/\QQ)].
%\]
Furthermore, the invariants of the action of $\Gal(K/\QQ)$ are exactly $\pm 1\in \mathbf{M}_4$.
%elements of $\mathbf{M}_4$ fixed by the action of $\Gal(K/\QQ)$ are exactly the images of $\pm 1$ in $\mathbf{M}_4$.
\end{lem}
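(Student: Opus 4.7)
The plan is to reduce the computation of $\mathbf{M}_4$ to an additive group on which the Galois action is transparent, using the inertness of $2$ in $K/\QQ$.

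First I would analyze the multiplicative structure. Since $2$ is inert, $\OK/2\OK \cong \FF_{2^n}$, and the exact sequence
\[
1 \to (1+2\OK)/(1+4\OK) \to (\OK/4\OK)^\times \to (\OK/2\OK)^\times \to 1
\]
presents $(\OK/4\OK)^\times$ with a subgroup of order $2^n$ and a quotient of odd order $2^n-1$. Because these orders are coprime, the sequence splits canonically (the odd part lifts uniquely by Hensel/Teichmüller), giving a Galois-equivariant direct product
\[
(\OK/4\OK)^\times \cong \FF_{2^n}^\times \times (1+2\OK)/(1+4\OK).
\]

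Next I would identify the squares. The factor $\FF_{2^n}^\times$ has odd order, so every element is a square and it vanishes in $\mathbf{M}_4$. On the factor $(1+2\OK)/(1+4\OK)$ squaring is trivial, since $(1+2a)^2 = 1+4a+4a^2 \equiv 1 \pmod 4$, so this factor survives intact. Thus
\[
\mathbf{M}_4 \;\cong\; (1+2\OK)/(1+4\OK).
\]
The map $1+2a \mapsto a \bmod 2$ is a well-defined isomorphism onto the additive group $\OK/2\OK \cong \FF_{2^n} \cong (\ZZ/2)^n$ (well-defined because $1+2a \equiv 1+2a' \pmod 4$ forces $a \equiv a' \pmod 2$, and a homomorphism because the cross term $4ab$ vanishes mod $4$). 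This gives the first assertion.

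For the Galois statement, the isomorphism $\mathbf{M}_4 \cong \OK/2\OK$ just constructed is $\Gal(K/\QQ)$-equivariant because $\sigma(1+2a) = 1 + 2\sigma(a)$. Since $2$ is inert, the reduction map $\Gal(K/\QQ) \to \Gal(\FF_{2^n}/\FF_2)$ is an isomorphism, so the invariants of $\Gal(K/\QQ)$ on $\OK/2\OK$ are exactly the Frobenius fixed points, namely $\FF_2 = \{0,1\}$. Pulling back through the isomorphism, these correspond to $1$ and $1+2 = -1$ in $\mathbf{M}_4$, and it remains only to observe that $-1$ is nontrivial in $\mathbf{M}_4$, which is immediate since its image in $\FF_{2^n}$ is $1 \neq 0$.

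The only subtlety is checking the splitting of the unit sequence and the Galois equivariance of the isomorphism $\mathbf{M}_4 \cong \FF_{2^n}$; both follow essentially for free from the coprimality of the orders and the naturality of the construction, so there is no serious obstacle.
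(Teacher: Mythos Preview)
Your proof is correct and follows essentially the same route as the paper: both use the exact sequence $1 \to 1+2(\OK/4) \to (\OK/4)^\times \to \FF_{2^n}^\times \to 1$ together with the Galois-equivariant identification $1+2a \leftrightarrow a$ between $1+2(\OK/4)$ and the additive group $\FF_{2^n}$, and both read off the invariants as $\FF_2 \leftrightarrow \{\pm 1\}$. The only cosmetic difference is that you split the sequence using the coprimality of $2^n$ and $2^n-1$ (Teichm\"uller/Sylow) and then kill the odd factor, whereas the paper leaves the sequence unsplit and instead applies the snake lemma to the squaring endomorphism; both arrive at $\mathbf{M}_4 \cong \FF_{2^n}$ as $G$-modules by the same mechanism.
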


\begin{proof} This proof is due to Sam Mundy \cite{Mundy}.
Consider the exact sequence
\begin{equation}\label{MundyExact1}
0 \to 1 + 2(\OK/4) \to (\OK/4)^\times \to (\OK/2)^\times \to 1.
\end{equation}
Note that $\OK/2 \ism \FF_{2^n}$ because $K$ is cyclic of odd degree and 2 is inert in $K$. Also, $G\ism \Gal(\FF_{2^n}/\FF_2)$.

Viewing $\FF_{2^n}$ as an additive group with Galois action by $G\ism \Gal(\FF_{2^n}/\FF_2)$, there is an isomorphism of Galois modules given by
\[
\psi: \FF_{2^n} \ism \OK/2 \to  1 + 2(\OK/4)
\]
\[
\psi: x \mapsto 1+2x.
\]
This map is easily seen to be a Galois equivariant  homomorphism. Injectivity and surjectivity follow from considering 2-adic expansions of elements in $\OK/4$. Since $\psi$ is an isomorphism we can rewrite the exact sequence of Galois modules in equation \ref{MundyExact1} as
\begin{equation}\label{MundyExact2}
0 \to \FF_{2^n} \to (\OK/4)^\times \to \FF_{2^n}^\times \to 1.
\end{equation}

Next consider the diagram of exact sequences below.
\[
\begin{tikzpicture}
\diagram (m)
{ 0 & \FF_{2^n} & (\OK/4)^\times & \FF_{2^n}^\times &  1 \\
0 &  \FF_{2^n} & (\OK/4)^\times & \FF_{2^n}^\times &  1 \\
};
\path [->] 
           (m-1-1) edge node [above] {} (m-1-2)
           (m-1-2) edge node [above] {} (m-1-3)
           (m-1-3) edge node [above] {} (m-1-4)
           (m-1-4) edge node [above] {} (m-1-5)
           
            (m-2-1) edge node [above] {} (m-2-2)
           (m-2-2) edge node [above] {} (m-2-3)
           (m-2-3) edge node [above] {} (m-2-4)
           (m-2-4) edge node [above] {} (m-2-5)
           
            (m-1-2) edge node [right] {$2(\cdot)$} (m-2-2)
           (m-1-3) edge node [right] {$(\cdot)^2$} (m-2-3)
           (m-1-4) edge node [right] {$(\cdot)^2$} (m-2-4)
          ;
\end{tikzpicture}
\]

The first vertical map is multiplication by 2, which is the zero map. The next two vertical maps are squaring. The third vertical map is an isomorphism because $\FF_{2^n}^\times$ is cyclic of odd order. Recall that
\[
\mathbf{M}_4:= \left( \OK/4\OK \right)^\times/\textrm{squares}.
\]
Then we apply the snake lemma to the diagram below.

\[
\begin{tikzpicture}
\diagram (m)
{ &                   &                          &           1 &  \\
0 & \FF_{2^n} & (\OK/4)^\times & \FF_{2^n}^\times &  1 \\
0 &  \FF_{2^n} & (\OK/4)^\times & \FF_{2^n}^\times &  1 \\
   &  \FF_{2^n} & \mathbf{M}_4 & 1 & \\
};
\path [->] 
	(m-1-4) edge node [above] {} (m-2-4)

           (m-2-1) edge node [above] {} (m-2-2)
           (m-2-2) edge node [above] {} (m-2-3)
           (m-2-3) edge node [above] {} (m-2-4)
           (m-2-4) edge node [above] {} (m-2-5)
           
            (m-3-1) edge node [above] {} (m-3-2)
           (m-3-2) edge node [above] {} (m-3-3)
           (m-3-3) edge node [above] {} (m-3-4)
           (m-3-4) edge node [above] {} (m-3-5)
           
           (m-4-2) edge node [above] {} (m-4-3)
           (m-4-3) edge node [above] {} (m-4-4)
           
            (m-2-2) edge node [right] {$0$} (m-3-2)
           (m-2-3) edge node [right] {$(\cdot)^2$} (m-3-3)
           (m-2-4) edge node [right] {$(\cdot)^2$} (m-3-4)
           
           (m-3-2) edge node [right] {} (m-4-2)
           (m-3-3) edge node [right] {} (m-4-3)
           (m-3-4) edge node [right] {} (m-4-4)
          ;
\end{tikzpicture}
\]

The snake lemma gives us the exact sequence of $G$-modules
\[
0 \to  \FF_{2^n} \to \mathbf{M}_4 \to 1.
\]
Therefore $\mathbf{M}_4\ism \FF_{2^n}$ as $G$-modules. The invariants of $\FF_{2^n}$ are $\FF_2$. Tracing through the isomorphism we see that this corresponds to the invariants $\{\pm 1\}$ in $\mathbf{M}_4$.
\end{proof}

    %14. [Hilb4][lemma:StarMG:1] [lemma:StarMG:2] Lemma from class field theory: If 2 is inert in $K/\QQ$ then and $b\equiv c \bmod 4$ then $(a,b)_2=(a,c)_2$.
    
    %15. [StarMG] Proof that \gls{starMG} is well-defined out of $\mathbb{M}_{4,G}$$:=$ the quotient of $\mathbf{M}_4$ by the action of $\Gal(K/\QQ)$. The gestalt is that \gls{starpp} factors through $\mathbf{M}_4$ and that \gls{starpp} is a property of rational $p$.
    
%\newpage

%\section{\todo{TODO- Mundy8}}

%Let $q\geq 4$ be a power of 2. 

%\section{\todo{HomMK}}%[defn:r]%[defn:varphi0]%[homMK]

%In classical class field theory, there is a map from the units of $K$ to the ray class group of $K$ (Theorem V.1.7 \cite{MilneCFT}). For $K:=K(n,\ell)$, because $U_T=U^2$, we can define an inverse to this map. 

%Let $K:=K(n,\ell)$. 
Let $\mathbb{M}_{q,G}$ denote the set of $\Gal(K/\QQ)$-orbits of $\mathbf{M}_q$ for $q$ a power of 2.
Recall that we say a modulus of $K$ is narrow whenever $\mm_\infty$ divides the modulus where $\mm_\infty$ is the product of all infinite places of $K$. Letting $\mm$ denote a narrow modulus with finite part $\mm_0$, let
$J_K^\mm=J_K^{\mm_0}$ denote the group of fractional ideals of $K$ prime to $\mm_0$ and let $P_K^\mm=P_K^{\mm_0}$ denote the subgroup of $J_K^\mm$ formed by the principal %fractional
ideals with generator $\alpha\in K^\times$ such that $\ord_2(q)\leq\ord_2(\alpha)$ and $\totpos{\alpha}$.
We let $\PP_K^\mm=\PP_K^{\mm_0}$ denote the set of prime ideals of $\OK$ co-prime to $\mm_0$ so that $J_K^\mm$ is generated by $\PP_K^\mm$.

\begin{defn}\label{defn:r} \switch{[defn:r]}
Let $K:=K(n,\ell)$. Let $q\geq 4$ be a power of 2.
%Let $K$ be a totally real number field, cyclic over $\QQ$ of odd degree $n:=[K:\QQ]$ with odd class number $h(K)$ such that $U_T=U^2$ and 2 is inert in $K/\QQ$. %Let $\ell$ denote the conductor of $K/\QQ$.

\begin{enumerate}
    \item %Let $\PP_K^{2}$ denote the set of primes of $K$ that are co-prime to $2$. 
    Define the map
\begin{align*}
\mathbf{r}_0: & 
\PP_K^{2}\to  \mathbf{M}_q \\
 & \pp \mapsto \alpha
\end{align*}
where $\alpha\in\OK$ is a totally positive generator for the principal ideal $\pp^{\text{$h(K)$}}$. 
%\PP_K^{2\ell} was called S' in the journal
\item %Let $\PP_\QQ^{2}$ denote the set of odd rational primes.  
Define the map
\begin{align*}
\mathbf{r}: & 
\PP_\QQ^{2}\to \text{ $\mathbb{M}_{q,G}$ } \\
 & p \mapsto [\mathbf{r}_0(\pp)]
\end{align*}
where $\pp$ is any prime in $K$ above $p$. 
Here $[\alpha]$ denotes the $\Gal(K/\QQ)$-orbit of $\alpha\in \mathbf{M}_4$ considered in $\mathbb{M}_{q,G}$. 
\end{enumerate}
\end{defn}
The map $\mathbf{r}_0$ is well-defined out of %$\PP_K^{2l}$
$\PP_K^{2}$; recall that by Lemma \ref{RCFh}, $U_T=U^2$ is equivalent to the coincidence of the narrow and wide Hilbert class groups so $U_T=U^2$ if and only if all principal ideals have a totally positive generator. Since squares are trivial in $\mathbf{M}_q$ by definition and $U_T=U^2$, the map $\mathbf{r}_0$ is well-defined.

The map $\mathbf{r}$ is well-defined out of $\PP_\QQ^{2}$ because $\mathbb{M}_{q,G}$ is the quotient of $\mathbf{M}_q$ by the $\Gal(K/\QQ)$-action so different choices of primes $\pp$ of $K$ above $p$ give the same result; $\mathbf{r}_{0}(\pp^\sigma) = \mathbf{r}_{0}(\pp)^\sigma$ for $\sigma\in\Gal(K/\QQ)$ and $\pp$ an odd prime of $K$.

 Since $J_K^q$ is generated by $\PP_K^q=\PP_K^2$, the map $\mathbf{r}_0$ induces a homomorphism 
 \[
{\varphi}_0: J_K^q \to \mathbf{M}_q.
 \]

\begin{lem}\label{homMK}\label{homMK:surj}\label{rmk:MKcanonism}\switch{[rmk:MKcanonism]}\switch{[homMK:surj]}
%Let $K:=K(n,\ell)$.
%The homomorphism $\varphi_0:  J_K^q \to \mathbf{M}_q$ induces a well-defined surjective homomorphism from the narrow ray class group over $K$ of conductor $q\mm_\infty$ to the group $\mathbf{M}_q$.
%\[
%\varphi: \nRCG{q}_K \twoheadrightarrow \mathbf{M}_q.
%\]

Let $K:=K(n,\ell)$.
The homomorphism $\varphi_0:  J_K^q \to \mathbf{M}_q$ induces a canonical surjective homomorphism
\[
\varphi: \nRCG{q}_K \to  \mathbf{M}_q.
\]
 %with 
%\[\#\ker(\varphi) = \frac{h(2^n-1)}{(U^2:U_{\mm,1})}\]
%where $\mm$ denotes the narrow modulus of finite part q and 
%\[
%U_{\mm,1}:=\{u \in U: \ord_2(u-1)\geq \ord_2(q), \totpos{u}\}.
%\]
\end{lem}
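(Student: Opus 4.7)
The map $\varphi_0$ is by construction a homomorphism out of $J_K^q$ --- it is defined on the free generating set $\PP_K^q$ and extended multiplicatively --- so the task splits in two: first verify that $\varphi_0$ annihilates $P_K^q$, so that it descends to a homomorphism $\varphi$ on $\nRCG{q}_K = J_K^q/P_K^q$, and then show by a Chebotarev argument that $\varphi$ is surjective.

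For the descent step, let $(\alpha)\in P_K^q$, so $\alpha$ is totally positive and $\alpha\equiv 1\pmod q$. The ideal $(\alpha)^{h(K)}=(\alpha^{h(K)})$ has the totally positive generator $\alpha^{h(K)}$, and by Lemma \ref{RCFh} together with the hypothesis $U_T=U^2$, any other totally positive generator differs from $\alpha^{h(K)}$ by a square. Hence $\varphi_0((\alpha))$ equals the class of $\alpha^{h(K)}$ in $\mathbf{M}_q$. The key input is that $h(K)$ is odd by the hypotheses on $K$: this makes $\alpha^{h(K)}=\alpha\cdot(\alpha^{(h(K)-1)/2})^{2}$ congruent to $\alpha$ modulo squares, and then $\alpha\equiv 1\pmod q$ forces $\varphi_0((\alpha))=1$ in $\mathbf{M}_q$. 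The induced map $\varphi:\nRCG{q}_K\to\mathbf{M}_q$ is canonical because no arbitrary choice was made: the class of a totally positive generator in $\mathbf{M}_q$ depends only on the underlying ideal.

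For surjectivity, take any $\bar\gamma\in\mathbf{M}_q$ and lift to $\gamma\in\OK$ coprime to $q$; adding a sufficiently large positive rational integer multiple of $q$ produces a totally positive $\delta\in\OK$ with $\delta\equiv\gamma\pmod q$. Applying Chebotarev's theorem to the narrow ray class field of $K$ of conductor $q$, the class $[(\delta)]\in\nRCG{q}_K$ contains a prime ideal $\pp$. Since $\pp\cdot(\delta)^{-1}\in P_K^q$, we may write $\pp=(\alpha\delta)$ for some totally positive $\alpha\equiv 1\pmod q$, so $\pp$ is principal with totally positive generator congruent to $\gamma$ modulo $q$. The same odd-exponent identity as in the descent step then gives $\varphi([\pp])=(\alpha\delta)^{h(K)}\equiv\gamma$ modulo squares, i.e.\ $\varphi([\pp])=\bar\gamma$.

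I do not anticipate a genuine obstacle here. The only two inputs that do any real work are Lemma \ref{RCFh} (which makes ``totally positive generator modulo squares'' an intrinsic invariant of a principal ideal) and the oddness of $h(K)$ (which collapses $h(K)$-th powers to the base modulo squares); together they reduce both the descent and the Chebotarev-based surjectivity argument to routine bookkeeping.
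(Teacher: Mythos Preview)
Your argument is correct, and the descent step is essentially the same idea as the paper's (both rely on $U_T=U^2$ via Lemma~\ref{RCFh} and the oddness of $h(K)$), though you package it a bit more directly by working with a single element of $P_K^q$ rather than with the two-ideal characterization $b\aaa=a\bb$ from Milne that the paper invokes. One small point you leave implicit: since $\varphi_0$ is \emph{defined} by extending $\mathbf{r}_0$ multiplicatively from primes, the claim ``$\varphi_0((\alpha))$ equals the class of $\alpha^{h(K)}$'' requires checking that the product of the chosen totally positive generators of the $\pp_i^{h(K)}$ agrees with $\alpha^{h(K)}$ up to a square unit. This is immediate (both are totally positive generators of the same principal ideal, so their ratio lies in $U_T=U^2$), but it is worth a sentence.

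Where you genuinely diverge from the paper is in the surjectivity step. The paper argues algebraically: it takes the $2$-part of the exact sequence $1\to U/U_{\mm,1}\to K_\mm/K_{\mm,1}\to \nRCG{q}_K\to C\to 1$, applies the snake lemma to the squaring map, and reads off that $(\pm)^n\times\mathbf{M}_q$ surjects (modulo the image of $U/U^2$) onto $\nRCG{q}_K/(\nRCG{q}_K)^2$, from which surjectivity of $\varphi$ follows. Your approach instead produces a preimage by Chebotarev. This works, but it is more machinery than needed: once you have constructed the totally positive $\delta\equiv\gamma\pmod q$, the ideal $(\delta)$ itself already lies in $J_K^q$ and satisfies $\varphi_0((\delta))=\bar\gamma$ by exactly the computation you use in the descent step, so the detour through a prime in the ray class of $(\delta)$ is unnecessary. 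Your route is more elementary than the paper's snake-lemma argument; the paper's route, on the other hand, exhibits the structural relationship between $\mathbf{M}_q$, the signatures, and $\nRCG{q}_K/(\nRCG{q}_K)^2$ explicitly.
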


\begin{proof}
We first show the induced homomorphism is well-defined. By Proposition V.1.6 in \cite{MilneCFT}, every element of $\nRCG{q}$ is represented by an integral ideal. Let $\aaa$ and $\bb$ be two integral ideals representing the same element of $\nRCG{q}$. Then by Proposition V.1.6 in \cite{MilneCFT}, there exist nonzero $a,b\in\OK$ such that  
\begin{align*}
&b\aaa=a\bb, \\
&a \equiv b \equiv 1 \bmod q, \quad \text{and}\\
&\totpos{ab}.
\end{align*}

Since $\varphi_0: J_K^q \to \mathbf{M}_q$ is a homomorphism, 
\[
\varphi_0(b\OK)\varphi_0(\aaa) = \varphi_0(a\OK) \varphi_0(\bb).
\]

Noting that $h(K)$ is odd and squares are trivial in $\mathbf{M}_q$ by definition, $\varphi_0$ maps any principal integral ideal $(\alpha)$ to the class in $\mathbf{M}_q$ containing the representative $\alpha\in\OK$ where $\alpha$ is a totally positive generator.

%If $a\in\OK$ is totally positive then $\totpos{ab} \implies \totpos{b}$ and $\varphi_0(a)=\varphi_0(b)=1$ proving that $\varphi_0(\aaa)=\varphi_0(\bb)$.

Since $U_T=U^2$, %implies the narrow and wide Hilbert class groups coincide
 every principal ideal of $\OK$ has a totally positive generator so there exists a unit $u\in\OK^\times$ such that $\totpos{ua}$ and  $\varphi_0(a) = ua$. Since $\totpos{ab}$, then $\totpos{u\inv b}$ so $\varphi_0(b) = u\inv b$. We know that $a \equiv b \equiv 1 \bmod q$. Since squares are trivial in $\mathbf{M}_q$ by the definition of $\mathbf{M}_q$, this implies 
\begin{align*}
&u^2 a \equiv b \quad \text{ in } \mathbf{M}_q\\
\implies & ua \equiv u\inv b \quad \text{ in } \mathbf{M}_q\\
\implies & \varphi_0(a\OK)=\varphi_0(b\OK)\\
\implies & \varphi_0(\aaa) = \varphi_0(\bb).
\end{align*}
Therefore the homomorphism $\varphi_0$ induces a well-defined homomorphism from $\nRCG{q}_K$.

%\begin{comment}
We now show the homomorphism is a canonical surjective homomorphism.
Let $\mm$ be the narrow modulus with finite part $q$. Let 
\begin{align*}
&K_\mm:=\{a\in K^\times: \ord_2(a)=0\},\\
&K_{\mm,1}:=\{a \in K^\times: \ord_2(a-1)\geq \ord_2(q), \totpos{a}\},\\
&U_{\mm,1}:=K_{\mm,1}\cap U.
\end{align*}
 Let $X\in \mathbf{M}_q$. Consider the exact sequence from 
Theorem V.1.7 in \cite{MilneCFT};
\[
1 \to U/U_{\mm,1} \to K_\mm/K_{\mm,1} \to \nRCG{\mm}_K \to C \to 1
\]
and the canonical isomorphism 
\begin{equation}\label{eqn:canonismKmm1}
K_\mm/K_{\mm,1} \ism \left(\pm\right)^n \times \left( \OK/q \right)^\times.
\end{equation}

Consider only the 2-part of each group. Then since $h(K)$ is odd, we have the short exact sequence
\[
1 \to (U/U_{\mm,1})[2^\infty] \to (K_\mm/K_{\mm,1})[2^\infty] \to (\nRCG{\mm}_K)[2^\infty] \to 1.
\]

Note that since squaring sends all signatures to the trivial signature, the canonical isomorphism in equation \ref{eqn:canonismKmm1} induces a canonical isomorphism on the 2-part modulo squares;
\[
(K_\mm/K_{\mm,1})[2^\infty]/(K_\mm/K_{\mm,1})[2^\infty]^2 \ism (\pm)^n \times \mathbf{M}_q.
\]

 Consider the squaring map and apply the snake lemma to get the following commutative diagram of exact sequences;
\begin{center}
\begin{tikzpicture}
\diagram (m)
{ 1 & (U/U_{\mm,1})[2^\infty] & (K_\mm/K_{\mm,1})[2^\infty] & (\nRCG{q}_K)[2^\infty] &  1  \\
 1 & (U/U_{\mm,1})[2^\infty] & (K_\mm/K_{\mm,1})[2^\infty] & (\nRCG{q}_K)[2^\infty] &  1  \\
  & U/U^2 & (\pm)^n \times \mathbf{M}_q   & \nRCG{q}_K/(\nRCG{q}_K)^2 & 1 &   \\
   & 1 & 1 & 1 &  &   \\};
\path [->] 
	   %top
           (m-1-1) edge node [above] {} (m-1-2)
           (m-1-2) edge node [above] {} (m-1-3)
           (m-1-3) edge node [above] {} (m-1-4)
           (m-1-4) edge node [above] {} (m-1-5)
           %(m-1-5) edge node [above] {} (m-1-6)
           %bottom
           (m-2-1) edge node [above] {} (m-2-2)
           (m-2-2) edge node [above] {} (m-2-3)
           (m-2-3) edge node [above] {} (m-2-4)
           (m-2-4) edge node [above] {} (m-2-5)
           %(m-2-5) edge node [above] {} (m-2-6)
           
           %(m-3-1) edge node [above] {} (m-3-2)
           (m-3-2) edge node [above] {} (m-3-3)
           (m-3-3) edge node [above] {$\psi$} (m-3-4)
           (m-3-4) edge node [above] {} (m-3-5)
           %(m-3-5) edge node [above] {} (m-3-6)
           ;
\path [->] 
           %vertical
           (m-1-2) edge node [right] {$(\cdot)^2$} (m-2-2)
           (m-1-3) edge node [right] {$(\cdot)^2$} (m-2-3)
           (m-1-4) edge node [right] {$(\cdot)^2$} (m-2-4)
           
           (m-2-2) edge node [right] {} (m-3-2)
           (m-2-3) edge node [right] {} (m-3-3)
           (m-2-4) edge node [right] {} (m-3-4)
           
           (m-3-2) edge node [right] {} (m-4-2)
           (m-3-3) edge node [right] {} (m-4-3)
           (m-3-4) edge node [right] {} (m-4-4)
          ;
\end{tikzpicture}
\end{center}

Then $\psi$ induces an isomorphism
\[
\psi: \faktor{((\pm)^n \times \mathbf{M}_q)}{\operatorname{image}(U/U^2)}  \longrightarrow \faktor{\nRCG{q}_K}{(\nRCG{q}_K)^2}.
\]
Tracing through the definitions of the maps,  $\varphi\circ \psi$ is surjective (it is essentially the identity). Therefore $\varphi$ is surjective.
%\end{comment}

%We now show surjectivity. Fix $X\in\mathbf{M}_q$ with representative $\alpha\in\OK$. By the Approximation Theorem (7.20 in \cite{MilneANT}), there exists some $\gamma\in K $ such that \[\totpos{\gamma\alpha} \quad \text{and} \quad \ord_2(q) \leq \ord_2(\gamma-1).\] Let $N\in\ZZ_{\geq 1}$ such that $N\gamma\in\OK$...

\end{proof}

%\begin{rmk}\label{rmk:MKcanonism}\switch{[rmk:MKcanonism]} For $K:=K(n,\ell)$,
%\[
%\varphi: \nRCG{4}_K \to  \mathbf{M}_4
%\]
%is a canonical surjective homomorphism with 
%\[\#\ker(\varphi) = \frac{h(2^n-1)}{(U^2:U_{\mm,1})}\]
%where $\mm$ denotes the narrow modulus of finite part 4.
%\end{rmk}

\section{An Equidistribution Lemma}%[equidistribution][defn:SI]

\begin{defn}\label{defn:density} Let $S$ be a set of primes and let $R\subseteq S$. If the limit exists, then the \textit{restricted density} of $R$ (restricted to $S$) is defined as
\[
d(R|S):= \lim_{N\to \infty} \frac{\# R_N}{\#S_N}\]
where $S_N$ and $R_N$ denote the set of primes in $S$ and $R$ respectively of absolute norm less than $N\in \ZZ_+$. 
\end{defn}

Recall that $\PP_\QQ^{m}$ denotes the set of rational primes not dividing $m$ and  $\PP_K^{\mm}$ denotes the set of primes of $K$ not dividing $\mm$.
Letting $\pp$ be a prime of $K$ above a rational prime $p$, denote the corresponding inertia degree $f_{K/\QQ}(p)=f_{K/\QQ}(\pp)$ (well-defined because $K$ is Galois over $\QQ$). That is, 
\[
f_{K/\QQ}(p) = f_{K/\QQ}(\pp) = \frac{\#D}{\#E}
\]
where $D$ is the decomposition group of $\pp$ for the extension $K/\QQ$ and $E$ is the inertia group. %as defined in Chapter 4 of \cite{Marcus}.

\begin{defn}\label{defn:SI}
Let $K=K(n,\ell)$. Define the following sets of rational primes. 
\[
S:=\{p\in\PP_\QQ^{2\ell}: f_{K/\QQ}(p) = 1\} \quad \text{and} \quad
I:=\{p\in\PP_\QQ^{2\ell}: f_{K/\QQ}(p) = n\}. 
\]
Define the following sets of primes of $K$.
\[
S':=\{\pp\in\PP_K^{2\ell}: f_{K/\QQ}(\pp) = 1\} \quad \text{and} \quad
I':=\{\pp\in\PP_K^{2\ell}: f_{K/\QQ}(\pp) = n\}. 
\]
\end{defn}

That is, $S\subseteq \PP_\QQ^{2\ell}$ is the set of odd rational primes that split completely in $K/\QQ$ and $I\subseteq \PP_\QQ^{2\ell}$ is the set of odd rational primes that are inert in $K/\QQ$. 
Furthermore, $S'$ is the set of primes of $K$ laying above the primes in $S$ and $I'$ is the set of primes of $K$ laying above the  primes in $I$.

Since $K/\QQ$ is cyclic of prime degree $n$, then $f_{K/\QQ}(p) = 1$ or $n$ for all $p\in\PP_\QQ^{2\ell}$ so in this case, $\PP_\QQ^{2\ell}$ is the disjoint union of $S$ and $I$.
The next Lemma asserts that for $K:=K(n,\ell)$, the primes are equidistributed in $\mathbf{M}_4$ via the map $\mathbf{r}_0$. 

Although the equidistribution generalizes to $\mathbf{M}_q$, note that the number of elements of $\mathbf{M}_8$ for example is different than the number of elements of $\mathbf{M}_4$ so the generalized statement would need to be adjusted accordingly.

%The following Lemma uses the term ``density" to mean Dirichlet density, though given Dirichlet/Chebotarev's Theorem for natural density, this proof extends also to natural density. 

\begin{lem}\label{equidistribution}\switch{[equidistribution]} Let $K:=K(n,\ell)$. 

\begin{enumerate}
    \item\label{equidistribution:allp}  For any $\alpha\in \mathbf{M}_4$, the density of $\pp\in \PP_K^{2\ell}$ such that $\varphi(\pp)=\alpha$ is $\frac{1}{2^n}$. That is, 
    \[
    d(\mathbf{r}_0\inv(\alpha)| \PP_K^{2\ell}) = \frac{1}{\#\mathbf{M}_4} = \frac{1}{2^n}.
    \]
    \item\label{equidistribution:spcomp}  Furthermore, the density does not change when we restrict to primes of $K$ that split completely in $K/\QQ$. That is, %if $S'$ is the set of primes $\pp\in\PP_K^{2\ell}$ with \gls{inertia degree} $f_{K/\QQ}(\pp)=1$ in $K/\QQ$, then
    \[
    d(\mathbf{r}_0\inv(\alpha) \cap S' | S') = \frac{1}{\#\mathbf{M}_4} = \frac{1}{2^n}.
    \]
\end{enumerate}

%\[
%\lim_{N\to\infty} \frac{\# \phi_N\inv(\alpha)}{\#S'_N} = \frac{1}{2^n}.
%\]
\end{lem}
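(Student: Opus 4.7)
The plan is to apply the Chebotarev density theorem to the narrow ray class field of $K$ of conductor $4$ and then push the equidistribution forward through the surjection $\varphi$ of Lemma~\ref{homMK:surj}.

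In more detail, let $L$ be the narrow ray class field of $K$ with modulus $4\mm_\infty$, so that class field theory gives a canonical isomorphism $\Art: \nRCG{4}_K \to \Gal(L/K)$ under which each prime $\pp\in \PP_K^{2\ell}$ corresponds to its Frobenius $\Frob_\pp$. By the construction of $\varphi_0$ in Definition~\ref{defn:r} and Lemma~\ref{homMK:surj}, the composite $\varphi \circ \Art\inv$ sends $\Frob_\pp$ to $\mathbf{r}_0(\pp)\in \mathbf{M}_4$. Chebotarev for $L/K$, counted by absolute norm so as to match Definition~\ref{defn:density}, then says that every element of the abelian group $\Gal(L/K)$ occurs as $\Frob_\pp$ with density exactly $1/|\nRCG{4}_K|$. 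Since $\varphi$ is surjective onto a group of order $2^n$ by Lemma~\ref{Mundy}, every fibre $\varphi\inv(\alpha)$ has the same size $|\nRCG{4}_K|/2^n$, and summing the Chebotarev densities over this fibre gives
\[
d(\mathbf{r}_0\inv(\alpha)\,|\,\PP_K^{2\ell}) = \frac{|\varphi\inv(\alpha)|}{|\nRCG{4}_K|} = \frac{1}{2^n},
\]
which proves (1).

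For (2), I would appeal to the classical fact that primes of $K$ of residue degree at least $2$ contribute density zero to $\PP_K$ when counted by absolute norm: such primes have absolute norm at least $p^2$, so by the trivial bound their counting function up to $N$ is $O(\sqrt{N}/\log N)$, which is negligible against the main term $N/\log N$ from Landau's prime ideal theorem. Since $K/\QQ$ is cyclic of prime degree $n$, the set $\PP_K^{2\ell}$ is the disjoint union of $S'$ (degree $1$) and $I'$ (degree $n$), so in fact $S'$ has density $1$ in $\PP_K^{2\ell}$. Consequently the conditional density in (2) coincides with the unconditional density from (1).

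The only real content is the compatibility of $\mathbf{r}_0$ with the Artin symbol, which is exactly the contents of Lemma~\ref{homMK:surj}; beyond that the argument is a bookkeeping exercise with Chebotarev, so I do not anticipate a genuine obstacle. Care is only required in choosing the \emph{narrow} modulus $4\mm_\infty$ (to accommodate the total positivity built into the definition of $\mathbf{r}_0$) and in invoking Lemma~\ref{Mundy} to pin down $|\mathbf{M}_4|=2^n$ so that the fibres of $\varphi$ have the expected size.
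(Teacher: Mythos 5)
Your proposal is correct and takes essentially the same route as the paper: both invoke Chebotarev (for natural density) in the narrow ray class field setting and use Lemma~\ref{Mundy} to identify $\#\mathbf{M}_4 = 2^n$, and both prove part (2) by observing that the inert primes $I'$ have density zero in $\PP_K^{2\ell}$ under norm-counting. The only cosmetic difference is that the paper first passes to the subextension $L/K$ cut out by $\ker(\varphi)$ (so each $\alpha$ corresponds to a single conjugacy class in $\Gal(L/K)$ of density $1/2^n$), whereas you apply Chebotarev to the full narrow ray class field and then sum over the fibre $\varphi^{-1}(\alpha)$, using that all fibres of a surjective homomorphism of finite abelian groups have equal size; these are equivalent bookkeeping choices.
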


\begin{proof} %We now prove Lemma \ref{equidistribution}.
We first prove part (1).
Recall that $\nRCF{4}=\nRCF{4}_K$ denotes the narrow ray class field over $K$ of conductor $4\mm_\infty$.
Let $G:=\Gal(\nRCF{4}/K)$. Define $H\leq G$ to be
\[
H:= \Art(\ker(\varphi))
\]
where $\Art$ denotes the Artin isomorphism. In other words, we define $H$ by  the following commutative diagram of exact sequences
\begin{center}
\begin{tikzpicture}
\diagram (m)
{ 1 & \ker(\varphi) & \nRCG{4} & \mathbf{M}_4 & 1  \\
  1 & H &  G &  \mathbf{M}_4 & 1 \\};
\path [->] 
	   %top
           (m-1-1) edge node [above] {} (m-1-2)
           (m-1-2) edge node [above] {} (m-1-3)
           (m-1-3) edge node [above] {$\varphi$} (m-1-4)
           (m-1-4) edge node [above] {} (m-1-5)
           %bottom
           (m-2-1) edge node [above] {} (m-2-2)
           (m-2-2) edge node [above] {} (m-2-3)
           (m-2-3) edge node [above] {} (m-2-4)
           (m-2-4) edge node [above] {} (m-2-5)
           ;
\path [->] 
           %vertical
           (m-1-2) edge node [right] {$\Art$} (m-2-2)
           (m-1-3) edge node [right] {$\Art$} (m-2-3)
           (m-1-4) edge node [right] {\text{id.}} (m-2-4)
          ;
\end{tikzpicture}
\end{center}
where surjectivity of $\varphi$ is proven in Lemma \ref{homMK:surj}.
Let $L$ be the fixed field of $H$ so that $\Gal(L/K) \ism G/H$. 
\begin{center}
\begin{tikzpicture}
\diagram (m)
{ \nRCF{4} \\
   L \\
   K \\};
\path [-] 
 	(m-1-1) edge node [right]{} (m-2-1)
	(m-2-1) edge node [right]{} (m-3-1)
	(m-1-1) edge [bend left=60] node [right]{$G$} (m-3-1)
	(m-1-1) edge [bend right=60] node [left]{$H$} (m-2-1);
\end{tikzpicture}
\end{center}

This induces a canonical isomorphism
\[
\mathbf{M}_4\ism G/H \ism \Gal(L/K).
\]
Note for the proof of part (2) that the action from $\Gal(K/\QQ)$ commutes with the diagram so $L/\QQ$ is Galois with $\Gal(L/K) \unlhd \Gal(L/\QQ)$.

For a Galois extension $F/E$ of conductor dividing $\mm$, let
\begin{align*}
\preArt{F|E}{E}(\tau)&\defeq \{ p\in\PP_E^{\mm}: \Art_{F|E}(p) = \left<\tau\right>\},\\
\preArt{F|E}{F}(\tau)&\defeq \{ \pp\in\PP_F^{\mm}: \pp \text{ lies above }p\in\preArt{F|E}{E} \},
%\Art_{F|E}(\pp) = \tau\},
\end{align*}
where $\tau\in\Gal(F/E)$ and $\left<\tau\right>$ denotes the conjugacy class of $\tau$.

Let $\alpha\in \mathbf{M}_4$.
Let $\sigma\in\Gal(L/K)$ corresponding to $\alpha\in\mathbf{M}_4$. Then $\preArt{L|K}{K}(\sigma) = \mathbf{r}_0\inv(\alpha)$ is the set of primes of $K$ that map to $\alpha$ via $\mathbf{r}_0$

Theorem 4 in \cite{Serre} asserts Cebotarev's density theorem for natural density, (or see \cite{NeukirchANT} Theorem VII.13.4 for a simpler proof using Dirichlet density).
By the special case of Cebotarev's density theorem in which $L/K$ is cyclic, $\preArt{L|K}{K}(\sigma)$ has a density and it is given by 
\[
\frac{1}{\#\Gal(L/K)} = \frac{1}{\#\mathbf{M}_4}.
\]

%While the proof of Theorem VII.13.4 in $\cite{NeukirchANT}$ 

The first asserted equality of part (1) is proved. The second equality of part (1) is true by Lemma \ref{Mundy}. %We now prove part (2).

To prove part (2), observe that 
\[
d(\mathbf{r}_0\inv(\alpha)| \PP_K^{2\ell}) =
d(\mathbf{r}_0\inv(\alpha) \cap S' | S') d(S'|\PP_K^{2\ell}) + d(\mathbf{r}_0\inv(\alpha) \cap I' | I') d(I'|\PP_K^{2\ell}).
\]
Since $d(S'|\PP_K^{2\ell}) =1$, $d(I'|\PP_K^{2\ell})=0$, and $0 \leq d(\mathbf{r}_0\inv(\alpha) \cap I' | I') \leq 1$,
\[
d(\mathbf{r}_0\inv(\alpha)| \PP_K^{2\ell}) =
d(\mathbf{r}_0\inv(\alpha) \cap S' | S').
\]

\end{proof}

\section{Property Star and the Starlight Invariant}%[defn:r][lem:spindepau][thm:FiveStar][defn:PropertyStarpp][lem:fixedStarMG][StarMG][defn:MazurInvariant]

Let $K:=K(n,\ell)$.
Recall that $\mathbb{M}_{4,G}$ denotes the set of $\Gal(K/\QQ)$-orbits of $\mathbf{M}_4$ and recall the statement of Lemma \ref{spindep} from \cite{FIMR},
which motivates the following.

\begin{thm}
\label{StarMG}\switch{[StarMG]}
Let $K:=K(n,\ell)$. %Assume 5 is inert in $K/\QQ$. 
Let $\alpha\in\OK$ denote a %totally positive 
representative of $[\alpha]\in$ $\mathbb{M}_{4,G}$. %such that the ideal $\alpha\OK$ is prime. 
Define the map
\begin{align*}
\text{$\star$}: & \text{ $\mathbb{M}_{4,G}$ } \to \{\pm 1\} \\
 & [\alpha] \mapsto \left\{ 
  \begin{array}{l l}
    1 & \quad \text{if }  (\alpha,\alpha^\sigma)_2 = 1 \text{ for all non-trivial } \sigma\in \Gal(K/\QQ)  \\
    -1 & \quad \text{otherwise.}\\
  \end{array} \right.
\end{align*}
Then $\star$ is a well-defined map.
\end{thm}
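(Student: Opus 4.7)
The plan is to verify that the condition ``$(\alpha, \alpha^\sigma)_2 = 1$ for all non-trivial $\sigma$'' depends only on the class $[\alpha] \in \mathbb{M}_{4,G}$ and not on the choice of representative $\alpha \in \OK$. I split this into two independent invariance checks: under the $G$-action on $\mathbf{M}_4$, and under multiplication by squares in $(\OK/4)^\times$.

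Galois invariance is essentially automatic. Since $2$ is inert in $K/\QQ$, the unique place $v$ above $2$ is fixed by every $\tau \in G$, so the Hilbert symbol at $v$ satisfies $(\alpha^\tau, \beta^\tau)_v = (\alpha, \beta)_v$. Combined with the fact that $G$ is abelian (cyclic of odd prime degree $n$), this yields $(\alpha^\tau, (\alpha^\tau)^\sigma)_v = (\alpha, \alpha^\sigma)_v$, so the condition is preserved under $\alpha \mapsto \alpha^\tau$.

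For invariance modulo squares, suppose $\alpha, \alpha' \in \OK$ have the same image in $\mathbf{M}_4$. Then in the completion $K_{(v)}^\times$ I may write $\alpha' = \alpha \beta^2 u$ with $\beta$ a local unit and $u \in 1 + 4\ocal_{K_{(v)}}$. Expanding $(\alpha', \alpha'^\sigma)_v$ by bilinearity, discarding the terms containing a $\beta^2$ factor (they vanish since squares kill the Hilbert symbol), and combining the two cross terms via the Galois equivariance identity $(u, \alpha^\sigma)_v = (\alpha, u^{\sigma^{-1}})_v$, I obtain
\[
(\alpha', \alpha'^\sigma)_v = (\alpha, \alpha^\sigma)_v \cdot (u, u^\sigma)_v \cdot (\alpha, u^\sigma u^{\sigma^{-1}})_v.
\]
The middle factor $(u, u^\sigma)_v$ is trivial by Lemma \ref{spindep}, extended from $u \in 1 + 4\OK$ to $u \in 1 + 4\ocal_{K_{(v)}}$ using density of $\OK$ in $\ocal_{K_{(v)}}$ together with continuity of the Hilbert symbol. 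The remaining factor is of the form $(\alpha, 1 + 4z)_v$ with $\alpha \in \ocal_{K_{(v)}}^\times$ and $z \in \ocal_{K_{(v)}}$.

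The main obstacle is to show that $(\alpha, 1 + 4z)_v = 1$ for all such $\alpha, z$. I plan to prove this by local class field theory: any quadratic extension $K_{(v)}(\sqrt\alpha)/K_{(v)}$ generated by a unit is either trivial, unramified (whose norm group contains all of $\ocal_{K_{(v)}}^\times$), or wildly ramified with conductor dividing $(4)$ (whose norm group contains $1 + 4\ocal_{K_{(v)}}$). In any case $1 + 4z$ is a norm from $K_{(v)}(\sqrt\alpha)^\times$, so the Hilbert symbol is trivial. An alternative is to invoke an explicit formula for the Hilbert symbol on unramified extensions of $\QQ_2$ in terms of residue-field traces (for instance $(1 + 2a, 1 + 2b)_v = (-1)^{\mathrm{Tr}_{k/\FF_2}(\bar a \bar b)}$), which directly shows that the symbol factors through $\mathbf{M}_4 \times \mathbf{M}_4$.
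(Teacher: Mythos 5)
Your proof is correct, but the mechanism is genuinely different from the paper's. The paper also splits the argument into square-invariance and Galois-invariance, and treats Galois-invariance the same way you do (acting by $\tau$ on a nontrivial solution of $\alpha x^2 + \alpha^\sigma y^2 = z^2$, using that $2$ is inert and $G$ is abelian). For square-invariance, however, the paper stays global: it invokes Lemma~2.3 of \cite{FIMR} to reduce to comparing $\alpha$ against $\beta\gamma^2$ modulo $8\OK$, then reduces further to the single case $\alpha \equiv 5\beta\gamma^2 \bmod 8\OK$, and kills the factor of $5$ by a direct Hilbert-symbol computation, including the identity $(5,5)_2=1$ from $5\cdot 2^2 + 5\cdot 1^2 = 5^2$ and the observation that $(\beta,5)_2(5,\beta^\sigma)_2 = 1$ by Galois-equivariance of the symbol at the inert place. (There is a slightly glossed step in the paper there --- that any $1+4x$ with $x\in\OK$ is, modulo squares mod $8$, congruent to $1$ or to $5$, which uses that $n$ is odd so $\mathrm{Tr}_{\FF_{2^n}/\FF_2}(1)=1$.) Your route instead passes to the completion $K_{(v)}$, writes $\alpha' = \alpha\gamma^2 u$ with $u\in 1+4\ocal_{K_{(v)}}$, expands by bimultiplicativity, and reduces everything to the single local statement $(\alpha, 1+4z)_v = 1$ for $\alpha$ a local unit, which you establish by the conductor--discriminant bound (conductor of $K_{(v)}(\sqrt\alpha)/K_{(v)}$ divides $(4)$, so $1+4\ocal_{K_{(v)}}$ lies in the norm group) or by the explicit residue-trace formula. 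This is cleaner and more conceptual: it does not rely on \cite{FIMR} Lemma~2.3 and subsumes the middle term, so your separate appeal to Lemma~\ref{spindep} (extended by continuity) is actually redundant --- $(u,u^\sigma)_v = (1+4z_1, 1+4z_2)_v = 1$ already follows from the same local fact since $1+4z_1$ is a unit. What the paper's version buys is that it avoids local class field theory entirely, working only with explicit quadratic forms and a mod-$8$ refinement lemma.
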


Note that by Lemma \ref{spindep}, a rational prime $p$ satisfies the spin relation 
\[
\spin(\pp,\sigma) = \spin(\pp,\sigma\inv) \quad \text{for all } \sigma\neq 1 \in \Gal(K/\QQ),
\]
where $\pp$ is a prime of $K$ above $p$ exactly when $\star\circ\mathbf{r}(p)=1$ where $\mathbf{r}$ is as defined in Definition \ref{defn:r}.

\begin{proof}
We will show that $\star$ is well-defined out of $\mathbf{M}_4$. Then because $\star$ is a property of the full Galois orbit, $\star$ is well-defined out of $\mathbb{M}_{4,G}$.

Let $\alpha,\beta\in\OK$ be two %totally positive
representatives of the same class in $\mathbf{M}_4$ so
\[
\alpha \equiv \beta \gamma^2 \bmod 4\OK \quad \text{for some $\gamma\in\OK$.}
\]

If $\alpha \equiv \beta \gamma^2 \bmod 8\OK$ then we can apply Lemma 2.3 from \cite{FIMR} to see that $(\alpha,\alpha^\sigma)_2=(\beta,\beta^\sigma)_2$ for all $\sigma\in\Gal(K/\QQ)$. 
Therefore, we may assume 
\[
\alpha \equiv 5 \beta \gamma^2 \bmod 8\OK.
\]

Suppose $(\alpha,\alpha^\sigma)_2=1$. Then by Lemma 2.3 in \cite{FIMR}, since $\alpha \equiv 5 \beta \gamma^2 \bmod 8\OK$, 
\begin{align*}
    & \left(5 \beta \gamma^2 , \left(5 \beta \gamma^2\right)^\sigma \right)_2=1 \\
\implies 
    & \left(5 \beta , \left(5 \beta \right)^\sigma \right)_2=1 \quad \text{by a property of Hilbert symbols.}
\end{align*}

Using bimultiplicativity of the Hilbert symbol,
\[ 
\left(5 \beta , \left(5 \beta \right)^\sigma \right)_2= (5,5)_2 (\beta,5)_2 (5, \beta^\sigma)_2 (\beta, \beta^\sigma)_2.
\]

Notice that since 2 is inert in $K/\QQ$ and since 5 is invariant under the action of $\Gal(K/\QQ)$, applying the Galois action to the quadratic form for $(\beta,5)_2$ yields the form for $(5, \beta^\sigma)_2$ so the cross terms cancel one another. Therefore 
\[ 
\left(5 \beta , \left(5 \beta \right)^\sigma \right)_2= (5,5)_2 (\beta, \beta^\sigma)_2.
\]

Since $5 \times 2^2 + 5 \times 1^2 = 5^2$, $(5,5)_2 =1$. Therefore
\[
\left(5 \beta , \left(5 \beta \right)^\sigma \right)_2= (\beta, \beta^\sigma)_2
\]
so
\[
(\alpha,\alpha^\sigma)_2=1 \implies (\beta, \beta^\sigma)_2 =1.
\]
Therefore $\star$ is a well-defined map from $\mathbf{M}_4$. 

We now prove that if $\alpha,\beta\in\mathbf{M}_4$ are the in same Galois orbit, then $\star(\alpha)=\star(\beta)$. Let $\tau\in\Gal(K/\QQ)$ such that $\alpha^\tau=\beta$ for $\alpha,\beta\in\mathbf{M}_4$.

Suppose $(\alpha,\alpha^\sigma)_2=1 $ for all $\sigma\neq 1$ in $\Gal(K/\QQ)$. 
Then in $K_{(2)}$, the completion of $K$ at $2\OK$, there is a nontrivial solution $x,y,z$ to
\[
\alpha x^2 + \alpha^\sigma y^2 = z^2.
\]
Applying the action of $\tau$ yields a nontrivial solution to 
\[
\beta x^2 + \beta^\sigma y^2 = z^2
\]
so $(\beta,\beta^\sigma)_2=1$ for all $\sigma\neq 1$.
\end{proof}

Recall that by Lemma \ref{Mundy}, the elements of $\mathbf{M}_4$ that are invariant under the $\Gal(K/\QQ)$-action are exactly $\pm1$. The following lemma fully describes  $\star$ on these invariants. 

\begin{lem}\label{pm1}\switch{[pm1]}  Let $K:=K(n,\ell)$. 

\begin{enumerate}
    \item  $\star(1)=1$. 
    \item  $\star(-1)=-1$.
\end{enumerate}

\end{lem}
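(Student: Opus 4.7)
My plan is to evaluate the map $\star$ directly on explicit lifts in $\OK$ of the two Galois-fixed classes $\pm 1\in\mathbf{M}_4$ (which are the only invariants, by Lemma \ref{Mundy}).

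For part (1), I will take $\alpha=1$ as a representative of the trivial class. Then $\alpha^\sigma=1$ for every non-trivial $\sigma\in\Gal(K/\QQ)$, and the tuple $(x,y,z)=(1,0,1)$ solves $\alpha x^2+\alpha^\sigma y^2=z^2$ in every completion of $K$. Hence $(1,1)_2=1$, giving $\star(1)=1$.

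For part (2), I will take $\alpha=-1$, noting that $-1\in\QQ$ forces $\alpha^\sigma=-1$ for all $\sigma$. The task then reduces to verifying $(-1,-1)_2=-1$, where the Hilbert symbol is taken at the unique place of $K$ above $2$ (unique because $2$ is inert in $K/\QQ$). My preferred route is the global product formula $\prod_v(-1,-1)_v=1$. Because $-1$ is a unit at every finite place, the symbol is trivial at all finite places $v\nmid 2$; because $K$ is totally real, each of the $n$ archimedean places contributes $(-1,-1)_v=-1$; and the only remaining factor is the contribution at the single prime above $2$. The product formula therefore collapses to $(-1)^n\cdot(-1,-1)_2=1$, and since $n$ is odd, this forces $(-1,-1)_2=-1$, whence $\star(-1)=-1$.

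One could equally well bypass reciprocity by invoking the base-change identity $(a,b)_L=(a,b)_F^{[L:F]}$ for $a,b\in F^\times$ in a finite extension of local fields, applied to the unramified extension $K_{(2)}/\QQ_2$ of degree $n$ together with the classical identity $(-1,-1)_{\QQ_2}=-1$. There is essentially no obstacle in this lemma: the whole content is the parity of $n$, which turns $(-1)^n$ back into $-1$.
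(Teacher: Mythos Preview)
Your proof is correct. Part~(1) is verbatim the paper's argument. For part~(2) you take a genuinely different route: the paper argues locally, observing that $(-1,-1)_2=1$ would force a nontrivial solution of $x^2+y^2+z^2\equiv 0\bmod 4\OK$ and then ruling this out by hand, whereas you invoke Hilbert reciprocity $\prod_v(-1,-1)_v=1$, noting that the symbol is trivial at every odd finite place (both entries are units), equals $-1$ at each of the $n$ real places, and that there is a single dyadic place since $2$ is inert, so that $(-1,-1)_2=(-1)^n=-1$.

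Your approach has the virtue of making the hypotheses do visible work: ``totally real'' supplies the $n$ factors of $-1$ at infinity, ``$2$ inert'' collapses the dyadic contribution to a single symbol, and ``$n$ odd'' converts $(-1)^n$ into $-1$. The paper's direct mod-$4$ argument avoids any appeal to reciprocity but, when carried out in $\OK/4\OK$ rather than $\ZZ/4\ZZ$, implicitly also uses that $n$ is odd (one eventually needs that $t^2+t+1$ has no root in $\FF_{2^n}$). Your alternative via the local base-change formula $(a,b)_{K_{(2)}}=(a,b)_{\QQ_2}^{[K_{(2)}:\QQ_2]}$ together with the classical value $(-1,-1)_{\QQ_2}=-1$ is equally valid and perhaps the most economical of the three.
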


\begin{proof}
Observe that $(1,1)_2=1$ because $x^2 + y^2 = z^2$ has the solution $(x,y,z)=(1,0,1)$.

%If $\alpha\equiv 1 \bmod 4$ then $\alpha\equiv \square \bmod 4$ so there exists a root $x_0$ of $f(x):=x^2-\alpha \bmod 4$. Note if $f'(x_0)=2x_0\equiv 0 \bmod 4$ then $x_0\equiv 0$ or $2 \bmod 4$, but then $x_0^2 \equiv 0 \bmod 4$ would imply $\alpha \equiv 0 \bmod 4$, which is false, so $f'(x_0)\nequiv 0 \bmod 4$. Therefore Hensel's Lemma implies there is a unique p-adic integer $x_1\equiv x_0 \bmod 4$ such that ${x_1}^2-\alpha=0$ in $\OK_2$. That is, $\alpha$ is a square in $\OK_2$. Therefore $(\alpha,\alpha^\sigma)_2=1$. 
%{\color{blue}(I think I assumed 2 is inert in $K$. If I don't make that assumption, is the proof much different?)}

If $(-1,-1)_2=1$, there would be a non-trivial solution to $x^2+y^2+z^2\equiv 0 \bmod 4$. Since there is no such solution, $(-1,-1)_2=-1$.
\end{proof}

%section placeholder

%Let $K$ be a totally real number field, cyclic over $\QQ$ of odd degree $n:=[K:\QQ]$ with odd class number $h(K)$ such that $U_T=U^2$ and 2 is inert in $K/\QQ$. %Let $\ell$ denote the conductor of $K/\QQ$.

%\section{The Mazur Invariant}%[defn:MazurInvariant]

\begin{defn}\label{defn:MazurInvariant}\switch{[defn:MazurInvariant]}
Let $K:=K(n,\ell)$. Define the \textit{starlight invariant}, $m_K$ to be the number of $\Gal(K/\QQ)$-orbits $X$ of $\mathbf{M}_4$ of non-trivial size such that $\star(X)=1$. That is, for $\sigma$ a generator of $\Gal(K/\QQ)$,
\[
m_K := \#\{ X \in \mathbb{M}_{4,G} : \# X = n \text{ and } \star(X)=1 \}.
\]
\end{defn}

\begin{rmk}\label{rmk:MazurInvariant} By Lemma \ref{pm1}, it is equivalent to define the \textit{starlight invariant} of $K$, as
\[
m_K = \#\ker(\star)-1.
\]
 Here $\star$ refers to the map $\star: \mathbb{M}_{4,G} \to \pm 1$ given in Theorem \ref{StarMG}.
\end{rmk}

We now define
\[
\star: \PP_K^2 \to \{\pm1\} \quad \text{and} \quad
\star: \PP_\QQ^2 \to \{\pm1\}
\] 
to be the composition of $\star$ as defined in Theorem \ref{StarMG} with $\mathbf{r}_0$ and $\mathbf{r}$ respectively as defined in Definition \ref{defn:r}.

 \begin{defn}\label{defn:starp}\switch{[defn:starp]}
%Let $p\in\PP_{\QQ}^{2\ell}$. That is, $p$ is an odd rational prime unramified in $K/\QQ$. Let $\pp\in \PP_K^{2\ell}$ be any prime of $K$ laying above $p$. 

Let $p\in\PP_{\QQ}^{2}$ and let $\pp\in \PP_K^{2}$. %laying above $p$
 Define $\star(\pp) := \star\circ \mathbf{r}_0$ and $\star(p) := \star\circ \mathbf{r}$, the composition of the maps $\mathbf{r}_0$ and $\mathbf{r}$ respectively with the map $\star$ from Definition \ref{defn:r}.

%We say that a prime $\pp\in\PP_K^{2\ell}$ (respectively $p\in\PP_\QQ^{2\ell}$) has property $\star$ or that $\star$ is true for $\pp$ (respectively $p$) whenever $\mgls{starpp}(\pp)=1$ (respectively $\mgls{starpp}(p)=1$). 
We say that a prime $\pp\in\PP_K^{2}$ (respectively $p\in\PP_\QQ^{2}$) has property $\star$ or that $\star$ is true for $\pp$ (respectively $p$) whenever $\star(\pp)=1$ (respectively $\star(p)=1$). 
\end{defn}

%The main results of this paper, 
Theorem \ref{thm:littledensity} and Theorem \ref{thm:littledensity:ext} give formulas in terms of $n$ and $m_K$ for the density of rational primes (assumed to split completely in Theorem \ref{thm:littledensity}) that satisfy property $\star$.

\section{Density Theorems}

%Recall that the notation $K(n,\ell)$ refers to a number field over $\QQ$ of degree $n$ and conductor $\ell$ satisfying the properties listed on page \pageref{KHyp}. 
 
 We first state and prove Theorem \ref{thm:littledensity} which gives 
a formula describing the {restricted density} of rational primes that satisfy the spin relation, the restriction being to primes that split completely in $K/\QQ$.
 Handling the inert case separately, we then apply Theorem \ref{thm:littledensity} to obtain Theorem \ref{thm:littledensity:ext} which gives a formula for the overall density of rational primes that satisfy the given spin relation. Lastly, we prove Theorems \ref{thm:Dstarbound} and \ref{thm:Dstarbound2} which give bounds on the densities given in Theorems \ref{thm:littledensity} and \ref{thm:littledensity:ext} respectively.
 
 %\[
 %\spin(\pp,\sigma) = \spin(\pp, \sigma\inv) %\quad \text{for all } \sigma \in \Gal(K/\QQ)
 %\]
 %for $K:=K(n,\ell)$. 
Recall the definitions of $S$, $S'$, $I$, and $I'$ from Definition \ref{defn:SI}. 

\begin{defn}\label{defn:RB}
Let $K:=K(n,\ell)$. Define the following sets of rational primes. 
\begin{align*}
B&:=\{ p\in \PP_\QQ^{2\ell}: \star(p) = 1  \} \\
R&:=B\cap S. %= \{ p\in S: \star(p) = 1  \} 
\end{align*}
\end{defn}

Note that by Lemma \ref{spindep}, $B$ is exactly the set of rational primes in $p\in \PP_\QQ^{2\ell}$ such that
\[
\spin(\pp,\sigma) = \spin(\pp,\sigma\inv) \quad \text{for all } \sigma\in \Gal(K/\QQ) 
\]
where $\pp$ is a prime of $K$ above $p$. 

Recall from Definition \ref{defn:density} that  $d(R|S)$ denotes the  {restricted density} of primes $p\in R$ restricted to $S$. 

\begin{restatable}{thm}{littledensity}\label{thm:littledensity}
Let $K:=K(n,\ell)$. Then
\[
d(R|S) = \frac{1+ \text{$m_K$}n}{2^{n}}.
\]
\end{restatable}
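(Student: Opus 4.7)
The plan is to compute $d(R|S)$ by translating it via the bijection between primes of $\QQ$ in $S$ and their lifts to $K$ in $S'$, and then applying the equidistribution from Lemma~\ref{equidistribution}(2) to count the $\Gal(K/\QQ)$-orbits in $\mathbf{M}_4$ that satisfy the $\star$ condition.

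First, I would enumerate the elements of $\mathbf{M}_4$ on which $\star \circ [\cdot]$ takes the value $1$, where $[\cdot]$ denotes the quotient to $\mathbb{M}_{4,G}$. Since $n$ is prime, every $\Gal(K/\QQ)$-orbit in $\mathbf{M}_4$ has size $1$ or $n$. By Lemma~\ref{Mundy} the size-$1$ orbits are exactly $\{\pm 1\}$, and by Lemma~\ref{pm1} only $\{1\}$ contributes to $\ker(\star)$. By Definition~\ref{defn:MazurInvariant} there are exactly $m_K$ orbits of size $n$ with $\star = 1$. Hence the preimage under the quotient $\mathbf{M}_4 \to \mathbb{M}_{4,G}$ of $\ker(\star)$ has cardinality $1 + m_K n$.

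Next I would rewrite $d(R|S)$ as a density over $S'$ rather than $S$. For $p \in S$ the $n$ primes $\pp_1,\ldots,\pp_n$ of $K$ above $p$ form a single $\Gal(K/\QQ)$-orbit and each has absolute norm $p$, so counting by norm we have $\#S'_N = n\cdot\#S_N$. Moreover, by Definition~\ref{defn:starp}, $\star(p) = \star(\pp_i)$ for each $i$, so the lifts of primes in $R$ to $S'$ are exactly those $\pp \in S'$ with $\mathbf{r}_0(\pp)$ lying in the $1 + m_K n$ good classes of $\mathbf{M}_4$ identified above. Therefore
\[
d(R|S) \;=\; \lim_{N \to \infty} \frac{n\cdot\#R_N}{n\cdot\#S_N} \;=\; \sum_{\alpha \in \ker(\star\circ[\cdot])} d\bigl(\mathbf{r}_0^{-1}(\alpha) \cap S' \,\bigm|\, S'\bigr).
\]

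Finally I would apply Lemma~\ref{equidistribution}(2), which says that each summand equals $1/2^n$. Summing the $1 + m_K n$ contributions yields the desired formula $d(R|S) = (1 + m_K n)/2^n$. The content of the argument is really all front-loaded into Lemmas~\ref{Mundy}, \ref{pm1}, and \ref{equidistribution}; the only mild point to verify carefully is the bookkeeping that relates densities over $S$ to densities over $S'$, which is straightforward because all lifts of $p \in S$ have the same absolute norm as $p$.
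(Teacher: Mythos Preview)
Your proposal is correct and follows essentially the same approach as the paper: pass from $S$ to $S'$, decompose according to the fibers of $\mathbf{r}_0$, apply Lemma~\ref{equidistribution}(2) to get $1/2^n$ for each fiber, and count the $\alpha\in\mathbf{M}_4$ with $\star(\alpha)=1$ as $1+m_Kn$ via Lemmas~\ref{Mundy} and~\ref{pm1}. The only cosmetic difference is that you do the orbit count first and the density computation second, whereas the paper reverses this order.
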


\begin{proof} 
%We now prove Theorem \ref{thm:littledensity}.

    %First we focus on proving the combined statement, \[\mgls{dK}&:=d(R|S) = \frac{1+ \text{$m_K$}n}{2^{n}}.\]
    
    Let $N\in \ZZ_{+}$. Let $R_N$ and $S_N$ denote the sets of  primes in $R$ and $S$ respectively of norm less than $N$. 
    We will show that 
    \begin{equation} \label{eqncombined}
    \lim_{N\to\infty} \frac{\#R_N}{\#S_N} = \frac{\#\{X\in \mathbf{M}_4: \star(X)=1\}}{\# \mathbf{M}_4} = \frac{1+m_Kn}{2^n}.
    \end{equation}
    
    Let $R'_N\subseteq \PP_K^{2\ell}$ denote the set of primes of $K$ that lay above rational primes in $R_N\subseteq \PP_\QQ^{2\ell}$ and define $S_N'$  similarly with respect to $S_N\subseteq \PP_\QQ^{2\ell}$. Let $\mathbf{r}_{0,N}$ denote the restriction of $\mathbf{r}_0$ to $S'_N \subseteq \PP_K^{2\ell}$.
    Since we have restricted to primes that split completely in $K/\QQ$, 
    \[
    \frac{\#R_N}{\#S_N} = \frac{\#R'_N}{\#S'_N}
    \]
    and  
    \[
    R'_N = \bigcup_{\star(\alpha)=1}  \mathbf{r}_{0,N}\inv(\alpha)
    \]
    where the above is a disjoint union over elements $\alpha\in \mathbf{M}_4$ such that $\star(\alpha)=1$. Therefore
    \[
    \frac{\#R'_N}{\#S'_N} = \frac{ 1}{\#S'_N} \sum_{\star(\alpha)=1} \#  \mathbf{r}_{0,N}\inv(\alpha).
    \]
    By Lemma \ref{equidistribution}, this implies
    \begin{align*}
    \lim_{N\to\infty} \frac{\#R'_N}{\#S'_N} &= \sum_{\star(\alpha)=1} \lim_{N\to\infty} \frac{ \#  \mathbf{r}_{0,N}\inv(X)}{\#S'_N}\\
    & =  \sum_{\star(\alpha)=1} \frac{1}{2^n}\\
    & = \frac{\#\{\alpha\in \mathbf{M}_4: \star(\alpha)=1 \}}{2^n}.
    \end{align*}
    This proves the first equality in equation \ref{eqncombined}. Let $\sigma$ be a generator of $\Gal(K/\QQ)$.
    By Lemma \ref{Mundy}, the elements of $\alpha\in \mathbf{M}_4$ such that $\alpha^\sigma = \alpha$ are $\alpha=\pm 1$ and we know that $\star(1)=1$ and $\star(-1)=-1$ by Lemma \ref{pm1}. Recalling that $m_K=\#\{[\alpha]\in \mathbb{M}_{4,G}: \alpha^\sigma \neq \alpha, \star(\alpha)=1 \}$, this implies 
    \[
    \#\{\alpha\in \mathbf{M}_4: \star(\alpha)=1\}= m_Kn+1.
    \]
    since $n=[K:\QQ]$ is prime so Galois orbits $X\in\mathbb{M}_{4,G}$ such that $X^\sigma \neq X$ each contain $n$ elements.
\end{proof}

We now state an extended version of Lemma \ref{equidistribution} which handles the inert case allowing us to give a formula in Theorem \ref{thm:littledensity:ext} for $d(B|\PP_\QQ^{2\ell})$, the overall density of rational primes that satisfy $\star$.

\begin{lem}\label{equidistribution:ext}\switch{[equidistribution:ext]}
Let $K:=K(n,\ell)$.

\begin{enumerate}
    \item\label{equidistribution:allp2}  For any $\alpha\in \mathbf{M}_4$, the density of $\pp\in \PP_K^{2\ell}$ such that $\varphi(\pp)=\alpha$ is $\frac{1}{2^n}$. That is, 
    \[
    d(\mathbf{r}_0\inv(\alpha)| \PP_K^{2\ell}) = \frac{1}{\#\mathbf{M}_4} = \frac{1}{2^n}.
    \]
    \item\label{equidistribution:spcomp2}  Restricting to primes of $K$ that split completely in $K/\QQ$,
    \[
    d(\mathbf{r}_0\inv(\alpha) \cap S' | S') = \frac{1}{\#\mathbf{M}_4} = \frac{1}{2^n}.
    \]
    \item\label{equidistribution:inert2} Restricting to inert primes of $K$, 
    \[
    d(\mathbf{r}_0\inv(\alpha) \cap I' | I') = \left\{
    \begin{array}{cc}
        \frac{1}{2} &  \text{if } \alpha = \pm1\\
        0 & \text{otherwise.}
    \end{array}
    \right.
    \]

\end{enumerate}
\end{lem}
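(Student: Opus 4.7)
My plan for proving Lemma \ref{equidistribution:ext} is to observe that parts \ref{equidistribution:allp2} and \ref{equidistribution:spcomp2} are exactly the content of Lemma \ref{equidistribution} and to focus on the inert case \ref{equidistribution:inert2}. The strategy combines an inertia-invariance observation with an application of Chebotarev to the Galois extension $L/\QQ$ introduced in the proof of Lemma \ref{equidistribution}.

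For $\pp \in I'$ the identity $\pp^\sigma = \pp$ for every $\sigma \in \Gal(K/\QQ)$ forces any totally positive generator $\alpha$ of $\pp^{h(K)}$ to satisfy $\alpha^\sigma/\alpha \in U_T$, hence in $U^2$ by hypothesis, so $\mathbf{r}_0(\pp)$ is $\Gal(K/\QQ)$-invariant in $\mathbf{M}_4$. By Lemma \ref{Mundy} this invariant must be $\pm 1$, which already handles the case $\alpha \notin \{\pm 1\}$.

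For $\alpha \in \{\pm 1\}$, I would recall that $L$ is the fixed field of $H := \Art(\ker\varphi) \subseteq \Gal(\nRCF{4}/K)$. Because $4\mathfrak{m}_\infty$ is $\Gal(K/\QQ)$-invariant and $\varphi$ is Galois equivariant, $H$ is Galois-stable, so $L/\QQ$ is itself Galois. Setting $\tilde G := \Gal(L/\QQ)$, the short exact sequence
\[
1 \to \mathbf{M}_4 \to \tilde G \to \Gal(K/\QQ) \to 1
\]
splits by Schur-Zassenhaus (since $\gcd(2^n, n) = 1$), giving $\tilde G \cong \mathbf{M}_4 \rtimes \langle \sigma \rangle$ with the natural Galois action. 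For $p$ inert in $K$ and $\mathfrak{L}$ a prime of $L$ above $p$, the Frobenius $\Frob_{\mathfrak{L}} \in \tilde G$ projects to a generator of $\Gal(K/\QQ)$, while the standard Frobenius-in-towers identity shows that $\Frob_{\mathfrak{L}}^n$ equals the Frobenius of the unique $\pp$ above $p$ in $L/K$, which by construction is $\mathbf{r}_0(\pp)$.

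The central computation counts $g = (m, \sigma^i) \in \tilde G$ with $\gcd(i, n) = 1$ and $g^n = \alpha$. Iterating the semidirect product and using $\gcd(i, n) = 1$ to reshuffle $\{ji \bmod n\}$ onto $\{0, \ldots, n-1\}$ gives $g^n = (N(m), 1)$ where $N(m) := \prod_{j=0}^{n-1} \sigma^j(m)$. Under the isomorphism $\mathbf{M}_4 \cong \FF_{2^n}$ of Lemma \ref{Mundy}, $N$ corresponds to the $\FF_2$-trace, whose fibers over the two invariants each have size $2^{n-1}$. Since $\pm 1$ is central in $\tilde G$, the resulting set is conjugation-stable with cardinality $(n-1) \cdot 2^{n-1}$, and Chebotarev (natural density, following Serre) then yields density $\frac{(n-1) \cdot 2^{n-1}}{n \cdot 2^n} = \frac{n-1}{2n}$ of rational primes hitting this class. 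Dividing by the density $(n-1)/n$ of inert rationals gives $\tfrac{1}{2}$, and the identity $N\pp = p^n$ for $\pp \in I'$ transfers the restricted density from inert rationals to $I'$. I expect the main technical hurdle to be the semidirect-product bookkeeping: verifying that the $n$-th power actually collapses to the untwisted norm and that the target set is a union of conjugacy classes. The coprimality of $|\mathbf{M}_4|$ and $|\Gal(K/\QQ)|$ is what makes both points fall out cleanly.
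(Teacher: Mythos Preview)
Your argument is correct, and the handling of parts \ref{equidistribution:allp2} and \ref{equidistribution:spcomp2}, as well as the $\alpha\notin\{\pm 1\}$ case in \ref{equidistribution:inert2}, matches the paper exactly. For $\alpha=\pm 1$, however, you take a genuinely different route. The paper simply notes that for $\pp\in I'$ the condition $\mathbf{r}_0(\pp)=s$ is a congruence condition on a totally positive generator of $\pp^{h(K)}$ (indeed, since $\pp=p\OK$ one may take the generator to be the rational integer $p^{h(K)}$, so the condition reduces to $p\equiv s\bmod 4$) and invokes the cyclic case of Cebotarev directly; the density $\tfrac12$ then follows essentially from Dirichlet. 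Your approach instead realises $\Gal(L/\QQ)$ explicitly as $\mathbf{M}_4\rtimes\langle\sigma\rangle$ via Schur--Zassenhaus, computes $g^n$ in the semidirect product, identifies the relevant fiber with a trace fiber in $\FF_{2^n}$, and then applies Cebotarev over~$\QQ$. This is heavier machinery for the same conclusion, but it has the merit of making the structure of $\Gal(L/\QQ)$ completely explicit and of showing transparently why the set $\{g:g^n=\alpha\}$ is conjugation-stable; the paper's argument is shorter but leans on the (unstated) observation that the generator may be taken rational.
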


\begin{proof}
Part (a) and part (b) were proven in Lemma \ref{equidistribution}. 

If $\alpha \neq \pm 1$ (for $\alpha\in \mathbf{M}_4$) then $\mathbf{r}_0\inv(\alpha) \cap I'=\emptyset$ since $\pm 1$ are the only invariants of the $\Gal(K/\QQ)$-action on $\mathbf{M}_4$ by Lemma \ref{Mundy}. %and the prime in $K$ above $p$ is invariant under the Galois action in the inert case. 
Therefore $d(\mathbf{r}_0\inv(\alpha) \cap I' | I') = 0$ if $\alpha \neq \pm 1$.

Now fix $s=\pm 1$. Then
\[
\mathbf{r}_0\inv(s) \cap I' = 
        \left\{ \pp\in I' : \left( \frac{\alpha}{4} \right)_K = s  \right\}
\]
where $\left( \frac{\alpha}{4} \right)_K$ denotes the quadratic residue symbol in $\OK$ for $\alpha\in\OK$ a totally positive generator of $\pp^{h(K)}$. The quadratic residue condition is a congruence condition modulo $4\ZZ$ and being inert is a congruence condition with an odd modulus so the Chinese remainder theorem together with the cyclic case of Cebotarev's Density Theorem implies
\[
d(\mathbf{r}_0\inv(s) \cap I' | I') = \frac{1}{2}.
\]
 Theorem 4 in \cite{Serre} asserts Cebotarev's Density Theorem for natural density, or see \cite{NeukirchANT} Theorem VII.13.4 for an simpler proof using Dirichlet density.
\end{proof}

%\[
%d(B |\PP_\QQ^{2\ell}) =  \frac{ 2^{n-1}+ (m_Kn+1)(n-1)}{2^nn}.
%\]
% D_K=d(B|\PP_\QQ^{2\ell})

 We now prove the main results.

\littledensityext*

\begin{proof}
%We first show that 
%\begin{equation}\label{eqn:BI}
 %   d(B \cap I | I ) = \frac{1}{2}.
%\end{equation}

Recall Definition \ref{defn:RB}. Note that by Lemma \ref{spindep}, $B$ is the set of rational primes not dividing $2\ell$ that satisfy the given spin relation. Therefore
\[D_K=d(B|\PP_\QQ^{2\ell}).\]

Let $N\in \ZZ_{+}$. Let $I_N$ and $S_N$ denote the sets of (rational) primes in $I$ and $S$ respectively with positive generator less than $N$. 
Let $I'_N\subseteq \PP_K^{2\ell}$ denote the set of primes of $K$ which lay above rational primes in $I_N\subseteq \PP_\QQ^{2\ell}$ and define $S_N'$  similarly with respect to $S_N\subseteq \PP_\QQ^{2\ell}$. 
Note that while $S'_N= \{\pp\in S' : \Norm_{K/\QQ}(\pp)<N\}$,
\[
I'_N= \{\pp\in I' : \Norm_{K/\QQ}(\pp)<N^n\}.
\]

Since we have restricted to primes that are inert in $K/\QQ$, 
    \[
    \frac{\#B\cap I_N}{\#I_N} = \frac{\#B' \cap I'_N}{\#I'_N} 
    \]
where $B':=\{\pp\in \PP_K^{2\ell}: \star(\pp)=1\}=\{\pp\in \PP_K^{2\ell}: \pp \text{ lays above some } p\in B\}.$

Let $\mathbf{r}_{0,N}$ denote the restriction of $\mathbf{r}_0$ to $I'_N \subseteq \PP_K^{2\ell}$.
Observe that $\pp\in I'$ implies $\pp^\sigma=\pp$ so  $\mathbf{r}_0(\pp)=\pm 1$ for all $\pp\in I'$ by Lemma \ref{Mundy}. Lemma \ref{pm1} states that $\star(1)=1$ and $\star(-1)=-1$. Therefore
\[
B'\cap I'_N = \mathbf{r}_{0}\inv(1) \cap I_N'.
\]
Therefore $d(B'\cap I'|I') = \frac{1}{2}$ by part (c) of Lemma \ref{equidistribution:ext}. Then since $\frac{\#B\cap I_N}{\#I_N} = \frac{\#B' \cap I'_N}{\#I'_N} $, we have proven that
\begin{equation}\label{eqn:BI}
 d(B \cap I | I ) = \frac{1}{2}.
\end{equation}

Note that since $K/\QQ$ is cyclic,  $\PP_\QQ^{2\ell}$ is the disjoint union of $S$ and $I$.

Cebotarev's Density Theorem is true for natural density by Theorem 4 in \cite{Serre}. (Theorem VII.13.4 in \cite{NeukirchANT} gives a simpler proof using Dirichlet density.)

By Cebotarev's Density Theorem, $d(S|\PP_\QQ^{2\ell})= \frac{1}{n}$ and 
$d(I|\PP_\QQ^{2\ell})= \frac{n-1}{n}$. Therefore
\begin{align*}
    d(B | \PP_\QQ^{2\ell}) 
    & = \lim_{N\to\infty} \frac{\#B_N}{\#\PP_{\QQ,N}^{2\ell}} 
    = \lim_{N\to\infty} \left( \frac{\#B \cap I_N}{\#I_N}\frac{\#I_N}{\#\PP_{\QQ,N}^{2\ell}} +\frac{\#B \cap S_N}{\#S_N}\frac{\#S_N}{\#\PP_{\QQ,N}^{2\ell}}   \right) \quad \\
    & = \left( \frac{1}{2} \right)\left( \frac{n-1}{n} \right) + \left( \frac{m_Kn+1}{2^n} \right)\left( \frac{1}{n} \right) \quad \text{by Theorem \ref{thm:littledensity}}\\
    & = \frac{ 2^{n-1}(n-1)+ m_Kn+1}{2^nn}.
\end{align*}
\end{proof}

%\section{Computing the FIMR Invariant}

%\section{Bounds}

\begin{lem}\label{halfstar}\switch{[halfstar]}
Let $K:=K(n,\ell)$. For all $\alpha\in \mathbf{M}_4$, if $\star(\alpha)=1$ then $\star(-\alpha)=-1$.
%\[\star(\alpha)=1 \implies \star(-\alpha)=-1.\]
\end{lem}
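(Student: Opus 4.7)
The plan is to compute $(-\alpha,(-\alpha)^\sigma)_2$ directly using bimultiplicativity of the Hilbert symbol and show it equals $-1$ for every non-trivial $\sigma$, which by definition of $\star$ forces $\star(-\alpha)=-1$. Since $-1\in\QQ$ is fixed by every $\sigma\in\Gal(K/\QQ)$, we have $(-\alpha)^\sigma=-\alpha^\sigma$, so bimultiplicativity gives
\[
(-\alpha,-\alpha^\sigma)_2 = (-1,-1)_2\,(-1,\alpha^\sigma)_2\,(\alpha,-1)_2\,(\alpha,\alpha^\sigma)_2.
\]

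The hypothesis $\star(\alpha)=1$ collapses the last factor to $1$. Lemma \ref{pm1} tells us the first factor is $-1$. The product of the middle two factors is $\epsilon(\alpha)\,\epsilon(\alpha^\sigma)$, where I set $\epsilon(x):=(-1,x)_2$. The crux is therefore to show
\[
\epsilon(\alpha^\sigma)=\epsilon(\alpha),
\]
so that $\epsilon(\alpha)\epsilon(\alpha^\sigma)=\epsilon(\alpha)^2=1$ and the whole product collapses to $-1$.

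For this Galois-equivariance step, I use the standard transformation property of the Hilbert symbol: $(a^\sigma,b^\sigma)_{\sigma(v)}=(a,b)_v$ for any place $v$ of $K$. Because $2$ is inert in $K/\QQ$, the unique place above $2$ is stabilized by every $\sigma\in\Gal(K/\QQ)$, so this reads $(a^\sigma,b^\sigma)_2=(a,b)_2$. Applying this with $a=-1$ (which is $\sigma$-fixed) and $b=\alpha$ gives $(-1,\alpha^\sigma)_2=(-1,\alpha)_2$, i.e., $\epsilon(\alpha^\sigma)=\epsilon(\alpha)$. Assembling everything yields $(-\alpha,-\alpha^\sigma)_2=-1$ for every non-trivial $\sigma$, so $\star(-\alpha)=-1$.

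There is no real obstacle here — the argument is a short computation. The only step one has to be careful about is the Galois-equivariance of $(-1,\alpha)_2$ under the inertness of $2$, and the use of Lemma \ref{pm1} to extract the sign $(-1,-1)_2=-1$ that ultimately flips $\star$.
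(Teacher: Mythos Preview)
Your proof is correct and follows essentially the same route as the paper's: expand $(-\alpha,-\alpha^\sigma)_2$ by bimultiplicativity, use Lemma~\ref{pm1} for $(-1,-1)_2=-1$, use the hypothesis to kill $(\alpha,\alpha^\sigma)_2$, and invoke Galois-equivariance of the Hilbert symbol at the inert prime $2$ (together with symmetry) to cancel the cross terms $(\alpha,-1)_2(-1,\alpha^\sigma)_2$. The only cosmetic difference is your introduction of the notation $\epsilon(x)=(-1,x)_2$.
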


\begin{proof}
By Lemma \ref{pm1}, $(-1,-1)_2=-1$.

Next note that $(a,b)_2 = (a^\sigma,b^\sigma)_{2}$ for all $\sigma\in \Gal(K/\QQ)$ since $2$ is inert in $K$.

Assume $\star(\alpha)=1$. Then $(\alpha,\alpha^\sigma)_2=1$ for all nontrivial $\sigma\in\Gal(K/\QQ)$. Let $\sigma\in\Gal(K/\QQ)$ be nontrivial. By bimultiplicativity of Hilbert symbols,
\begin{align*}
(-\alpha,-\alpha^\sigma)_2 & = (-\alpha,-1)_2 (-\alpha,\alpha^\sigma)_2 \\
& = (-1,-1)_2 (\alpha,-1)_2 (-1,\alpha^\sigma)_2 (\alpha,\alpha^\sigma)_2.
\end{align*}

Next observe $(\alpha,-1)_2=(-1,\alpha)_2 =(-1,\alpha^\sigma)_2$, the second equality coming from the Galois-invariance shown earlier in this proof. Therefore $(\alpha,-1)_2 (-1,\alpha^\sigma)_2=1$. Then since $(\alpha,\alpha^\sigma)_2=1$ and $(-1,-1)_2=-1$, we get that
\[
(-\alpha,-\alpha^\sigma)_2=-1.
\]
Therefore $\star(-\alpha)=-1$.
\end{proof}

Recall the Definitions \ref{defn:SI} and  \ref{defn:RB} defining $S$ and $R$. %and \ref{KHyp}.

%\begin{thm}\label{thm:Dstarbound}\switch{[thm:Dstarbound]} Let $K:=K(n,\ell)$. 
%\[
% \frac{1}{2^n} \leq d(R|S) \leq \frac{1}{2}.
%\]
%\end{thm}

\begin{restatable}{thm}{Dstarbound}
\label{thm:Dstarbound}
Let $K:=K(n,\ell)$. 
\[
 \frac{1}{2^n} \leq d(R|S) \leq \frac{1}{2}.
\]
\end{restatable}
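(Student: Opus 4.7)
The plan is to reduce both bounds to counting the set $A := \{\alpha \in \mathbf{M}_4 : \star(\alpha) = 1\}$. By Theorem \ref{thm:littledensity} (more precisely, by the identity established in its proof), we have
\[
d(R|S) = \frac{\#\{\alpha \in \mathbf{M}_4 : \star(\alpha) = 1\}}{2^n} = \frac{\#A}{2^n},
\]
and of course $\#A = 1 + m_K n$. So the claim is equivalent to $1 \leq \#A \leq 2^{n-1}$.

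For the lower bound, I would simply invoke Lemma \ref{pm1}: since $\star(1) = 1$, we have $1 \in A$, so $\#A \geq 1$ and therefore $d(R|S) \geq 1/2^n$.

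For the upper bound, the key tool is Lemma \ref{halfstar}, which states that $\star(\alpha) = 1$ implies $\star(-\alpha) = -1$. In other words, multiplication by $-1$ gives a map $\mathbf{M}_4 \to \mathbf{M}_4$ sending $A$ into its complement. This map is an involution, and it has no fixed points: by Lemma \ref{Mundy} the Galois-invariant elements of $\mathbf{M}_4$ are exactly $\pm 1$ and this set has size $2$, so $-1 \neq 1$ in $\mathbf{M}_4$, whence $-\alpha \neq \alpha$ for every $\alpha \in \mathbf{M}_4$. Consequently the involution partitions $\mathbf{M}_4$ into $2^{n-1}$ disjoint pairs $\{\alpha, -\alpha\}$, and Lemma \ref{halfstar} guarantees that $A$ contains at most one element of each pair. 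Hence $\#A \leq 2^{n-1}$, giving $d(R|S) \leq 2^{n-1}/2^n = 1/2$.

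There is no real obstacle here; the only point requiring a moment's care is verifying that $-1 \neq 1$ in $\mathbf{M}_4$ so that the pairing $\alpha \leftrightarrow -\alpha$ is genuinely fixed-point-free, and this is immediate from Lemma \ref{Mundy}.
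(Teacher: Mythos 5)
Your proposal is correct and takes essentially the same approach as the paper: reduce to counting $\{\alpha\in\mathbf{M}_4:\star(\alpha)=1\}$, use Lemma \ref{pm1} for the lower bound, and use Lemma \ref{halfstar} together with the fixed-point-freeness of $\alpha\mapsto -\alpha$ for the upper bound. The only cosmetic difference is how you justify $-1\neq 1$ in $\mathbf{M}_4$ (via Lemma \ref{Mundy}) versus the paper's remark that $-1$ is not a square modulo $4\OK$; both are valid.
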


\begin{proof}
By Theorem \ref{thm:littledensity},
\[
d(R|S)= \frac{1 + m_Kn}{2^n}=\frac{\#\{\alpha\in \mathbf{M}_4: \star(\alpha)=1\}}{2^n}.
\]

Lemma \ref{halfstar} implies the upper bound;  $\alpha \neq -\alpha$ in $\mathbf{M}_4$ because $-1$ is not a square modulo $4\OK$.

The lower bound is true because $\star(1)=1$ by Lemma \ref{pm1}
so 
\[
\#\{\alpha\in \mathbf{M}_4: \star(\alpha)=1\} \geq 1.
\]
\end{proof}

Theorem \ref{thm:Dstarbound2} is a Corollary of Theorem \ref{thm:Dstarbound} obtained from the fact that
\[
D_K = \frac{n-1}{2n} + \left(\frac{1}{n}\right)d(R|S)
\]
as in the proof of Theorem \ref{thm:littledensity:ext}.

\Dstarboundtwo*

\section*{Acknowledgements}

This research was conducted at Cornell University under the supervision of Ravi Ramakrishna. The paper was finalized at the Max Planck Institute for Mathematics.
In addition, the author would like to thank Brian Hwuang, Christian Maire, and Carlo Pagano for their helpful mathematical discussions and Pieter Moree for his guidance.

%\newpage

%\chapter{Glossary}
%\printglossary[title={Notation and Terminology}]

\bibliography{Thesisbib}

\begin{thebibliography}{FIMR13}

\bibitem[AF67]{AF}
J.~V. Armitage and A.~Fr\"ohlich.
\newblock Classnumbers and unit signatures.
\newblock {\em Mathematika}, 14:94--98, 1967.

\bibitem[BCP97]{magma}
Wieb Bosma, John Cannon, and Catherine Playoust.
\newblock The {M}agma algebra system. {I}. {T}he user language.
\newblock {\em J. Symbolic Comput.}, 24(3-4):235--265, 1997.
\newblock Computational algebra and number theory (London, 1993).

\bibitem[CMM20]{DRPjoint}
Stephanie {Chan}, Christine {McMeekin}, and Djordjo {Milovic}.
\newblock {A Density of Ramified Primes}.
\newblock {\em arXiv e-prints}, page arXiv:2005.10188, May 2020.

\bibitem[FIMR13]{FIMR}
J.~B. Friedlander, H.~Iwaniec, B.~Mazur, and K.~Rubin.
\newblock The spin of prime ideals.
\newblock {\em Invent. Math.}, 193(3):697--749, 2013.

\bibitem[McM18]{DRP}
Christine McMeekin.
\newblock {\em A Density of Ramified Primes}.
\newblock ProQuest LLC, Ann Arbor, MI, 2018.
\newblock Thesis (Ph.D.)--Cornell University.

\bibitem[Mil13]{MilneCFT}
J.S. Milne.
\newblock Class field theory (v4.02), 2013.
\newblock Available at www.jmilne.org/math/.

\bibitem[Mun]{Mundy}
Sam Mundy.
\newblock Conversation.

\bibitem[Neu99]{NeukirchANT}
J\"urgen Neukirch.
\newblock {\em Algebraic number theory}, volume 322 of {\em Grundlehren der
  Mathematischen Wissenschaften [Fundamental Principles of Mathematical
  Sciences]}.
\newblock Springer-Verlag, Berlin, 1999.
\newblock Translated from the 1992 German original and with a note by Norbert
  Schappacher, With a foreword by G. Harder.

\bibitem[Ser81]{Serre}
Jean-Pierre Serre.
\newblock Quelques applications du th\'{e}or\`eme de densit\'{e} de
  {C}hebotarev.
\newblock {\em Inst. Hautes \'{E}tudes Sci. Publ. Math.}, (54):323--401, 1981.

\end{thebibliography}
\bibliographystyle{alpha}

\end{document}